\pgfplotsset{compat=newest}
\theoremstyle{plain}
\newtheorem{thm}{Theorem}[section]
\newtheorem*{thm-non}{Theorem}
\newtheorem{lem}[thm]{Lemma}
\newtheorem{defn}[thm]{Definition}
\newtheorem{rem}[thm]{Remark}
\newtheorem{prop}[thm]{Proposition} 
\numberwithin{equation}{section}
\newtheorem{hyp}{Hypothesis}
\def\xeta{X^{+\eta}}
\def\var#1#2{\mu_{(#1,#2)}}
\def\vardis#1#2{\boldsymbol{\mu}_{(#1,#2)}}
\def\dX{\partial X}
\def\R{\mathbb{R}}
\def\cH{\mathcal{H}}
\def\mdiv{\mathrm{div}}
\def\divx{\mathrm{div}_X}
\def\beps{{\boldsymbol\epsilon}}
\def\oX{{\text{vol}_X}}
\def\Nc{\mathcal{N}}
\def\Hh{\mathcal{H}}
\def\Bb{\mathcal{B}}
\def\Yy{\mathcal{Y}}
\newcommand{\PP}{\mathbb{P}}
\renewcommand{\emptyset}{\varnothing}
\newcommand{\Haus}{\mathcal{H}^2}
\newcommand{\tri}{\mathcal{T}}
\newcommand{\normM}{\ensuremath\boldsymbol{n}_X}
\newcommand{\bfn}{\ensuremath\boldsymbol{n}}
\newcommand{\tin}{\tri^{\operatorname{in}}}
\newcommand{\tout}{\tri^{\operatorname{out}}}
\newcommand{\Var}{{\rm Var}}
\newcommand{\test}{C_0^1(\R^3\times G(3,2)\times \R)}
\newcommand{\diam}{{\rm diam}}
\newcommand{\ie}{{\itshape i.e.}}
\newcommand{\abs}[1]{\left\|#1\right\|}
\DeclareMathOperator{\Ext}{Ext}
\newcommand{\res}{\mathop{\hbox{\vrule height 7pt width .5pt depth 0pt
\vrule height .5pt width 6pt depth 0pt}}\nolimits}
\newcommand\rst[2]{{#1}_{\restriction_{#2}}}
\newcommand{\normvar}[1]{\|#1\|}
\newcommand{\normvarW}[1]{\|#1\|_{W'}}
\title{The matching problem between functional shapes via a $BV$ penalty term: a $\Gamma$-convergence result}
\author{G. Nardi$^{1}$, B. Charlier$^{2}$,  A. Trouv\'e$^{1}$ }
\date{}
\begin{document}

\maketitle

\footnotetext[1]{Centre Borelli, Ecole Normale Supérieure de Paris-Saclay.
Emails:  giacomo.nardi, alain.trouve@ens-paris-saclay.fr}
\footnotetext[2]{IMAG, Universit\'e de Montpellier. Email:  benjamin.charlier@umontpellier.fr}

\begin{abstract}

The matching problem often arises in image processing and involves finding a correspondence between similar objects. In particular, variational matching models optimize suitable energies that evaluate the dissimilarity between the current shape and the relative template. A penalty term often appears in the energy to constrain the regularity of the solution.
To perform numerical computation, a discrete version of the energy is defined. Then, the question of consistency between the continuous and discrete solutions arises.

This paper proves a $\Gamma$-convergence result for the discrete energy to the continuous one. In particular, we highlight some geometric properties that must be guaranteed in the discretization process to ensure the convergence of minimizers.

We prove the result in the framework introduced in \cite{ABN}, which studies the matching problem between geometric structures carrying on a signal (fshapes). The matching energy is defined for $L^2$ signals and evaluates the fshapes difference in terms of varifolds norm. 
This paper maintains a dual attachment term, but we consider a $BV$ penalty term in place of the original $L^2$ norm.

\end{abstract}
\section{Introduction}\label{Introduction}

\paragraph{Context and previous work.} 
This paper discusses some theoretical aspects of the matching problem, which has several applications in image processing. The matching problem seeks a bijection between a current surface (or curve) and a target one by minimizing a dissimilarity function called matching energy. Such an error function is the sum of two terms: the penalty term defining the regularity of the optimal solution and the attachment term estimating the dissimilarity between current and target surfaces.

The solution to the matching problem defines the geometric transformation that links the two surfaces, enabling their comparison or transformation to a standard template.
Moreover, this variational approach is implemented in the discrete framework using the stepwise descent algorithm, which provides the evolution related to the mentioned geometric transformation.
The image processing community broadly studies the matching problem to define robust methods for shape analysis or image registration \cite{Rigid-evol, Geod-bv2, Bauer2020ANF, ABN, Pennec2019}.

We point out, in particular, the increasing use of tools from geometric measure theory (currents, varifolds) for the definition of matching energies \cite{ABN, Roussillon2016, Roussillon2020, Kaltenmark2017}. F. Almegren has developed the theory of varifolds \cite{Almgren}, afterward generalized by W.K. Allard \cite{Allard}, to generalize the notion of manifold in the framework of measure theory. Varfiold's theory considers a shape as a rectifiable measure and enables the definition of attachment terms in the sense of measures. This theoretical framework is suitable for obtaining compactness and existence results, but the computation of weak metrics remains a challenging problem. 

Nowadays, new developments in non-invasive acquisition techniques, such as Magnetic Resonance Imaging (MRI) or Optical Coherence Tomography (OCT), provide geometric-functional data for several diseases (e.g., cortical thickness in the study of Alzheimer's disease or thickness of retina layers for the evolution of glaucoma). Then, new methods are needed to deal with data containing geometrical and functional information \cite{LEE2017570, Aston2020} to describe anatomical variability and produce statistical analysis for related diseases.

In this context,  \cite{ABN}  introduces a new framework to study the matching problem of surfaces equipped with a signal (functional shapes or fshapes). 
The matching energy proposed in \cite{ABN} considers the $L^2$ norm of the signal as a penalty term and a varifold-type attachment term. 
To overcome the difficulties linked to the discretization of weak varifold norm, the authors develop the model in the framework of  Reproducing Kernel Hilbert Spaces (RKHS), which allows them to compute a varifold norm via the dual representation theorem in Hilbert spaces.

As usual, for minimization problems on surfaces, the question of error estimates for discrete solutions arises. The definition of a suitable discrete framework is central to ensuring that discrete solutions give a good approximation of continuous ones. 
In particular, the approximation quality should improve when the triangulation approximating the surface is finer (for example, in the context of finite elements, when the diameter of the triangles of the triangulation is sufficiently small).

 Unfortunately, this is not true in general, and a very famous example called the Schwartz lantern (see \cite{Bronstein} Section 3.9) shows that a cylinder can be approximated  (in the Hausdorff topology) by a sequence of triangulations whose areas diverge.
This example shows that the discretization process of a surface can highly affect the quality of the optimal solution. We finally note that, although we focus on the fshapes matching problem, the previous remarks hold for all variational problems on surfaces.

This paper aims to determine the conditions ensuring that the discrete solution represents a consistent (in terms of geometry) and close (in terms of energy) approximation of the continuous optimal solution.
In particular, we prove that when the discretization process verifies some geometrical conditions, the discrete solution is a good approximation of the continuous one. The proof is given in the framework of the $\Gamma$-convergence theory, proving in particular that the discrete minimizers converge to the continuous one as the diameter of the triangles in the triangulation goes to zero. Error estimates are well established for Euclidean finite elements. However, to our knowledge, this kind of result has not been established for variational problems on surfaces and represents the main contribution of this paper. 
  
\paragraph{Contributions.} 
We call functional shape (fshape)  any couple $(X,f)$ composed of a smooth surface  $X$  with boundary and a signal $f$  defined on $X$.

A functional shape defines a varifold by considering the measure  $\var{X}{f} = \Haus \res X \otimes \delta_{T_X(x)} \otimes  \delta_{f(x)} $.
We study the matching problem between two fixed surfaces $X,Y$ to optimize the signal $f$ on $X$ with respect to a target signal $g$ defined on $Y$. The optimal signal is found by minimizing the following  matching energy
\begin{equation*}
	E(f) = \|f\|_{BV(X)} + \normvar{\var{X}{f}-\var{Y}{g}}^2
\end{equation*}
where $\|.\|$ denotes a dual norm, and the minimum is taken on $BV(X)$. 

This formulation differs slightly from the original model presented in \cite{ABN}. First, we consider a $BV$ penalty term instead of the $L^2$ norm, proving the result for classic non-regular metrics. The regularity of $f$ can strongly influence the behavior of the optimal solution. Standard $L^2$ signals allow one to work with smooth norms, but accumulation or oscillation phenomena may appear in the optimal solution. For this reason, gradient-dependent norms are increasingly used in image processing to guarantee more realistic optimal configurations. Furthermore, we prove that our results generalize to penalty terms $L^2$ or $H^1$. 

The other main difference concerns the definition of the dual norm. This work considers a standard dual norm instead of the RKHS-based norm proposed in \cite{ABN}. This choice provides the result in a more general context and allows it to be read even without expertise in RKHS theory (Remarks \ref{choiceW} and \ref{l2-model-exist} show how to adapt our results to the RKHS-based norm).
Finally, the proof is given in the case of fixed geometry ($X$ is fixed), and the generalization to the optimization problem concerning $(X,f)$ will be addressed in further work.

The first problem we address concerns the definition of admissible triangulation to obtain a geometrically consistent approximation of surfaces. 
To ensure a complete discretization of the surface in a neighborhood of its boundary, we consider a larger triangulation whose excess part of the projection of X has a small area (Definition \ref{admis-triang}). This allows us to overcome the bijection problems at the boundary of $X$.

Moreover, as explained above, the proximity in terms of Hausdorff distance does not guarantee a consistent surface approximation. The cited example of Schwartz lantern shows that when approximating a cylinder with triangulated surfaces, the area of the triangulations can even diverge depending on the geometric properties of the triangles. In particular, \cite{MorvanThibert} shows that convergence of areas can be ensured by a condition on the angle between the normal vectors at corresponding points (via the projection) of the surface and the triangulation. This additional requirement is added to the definition of admissible triangulations to guarantee a convenient approximation of the related surface (see Lemma \ref{lifting} and Hypothesis \ref{hyp0l2}). 

Once the set of admissible triangulations is defined, we recall the discrete version of the problem following \cite{ABN}. Section \ref{Discretization} describes in detail how to set the problem in the setting of finite elements via the projection of the triangulations on the surface.
Then, the continuous problem can be approximated by a sequence of discrete problems defined on some triangulations whose triangle diameter goes to zero. The triangle diameter is also the parameter indexing the family of discrete problems, which ensures the convergence results for fine triangulations.

The main result proves that discrete solutions approximate continuous minimizers. The result is proven using the  $\Gamma$-convergence theory, which is a notion of convergence of functionals introduced by E. De Giorgi \cite{Ennio, Braides}, allowing to justify the transition from discrete to continuous problems. $\Gamma$-convergence guarantees, in particular, the convergence of the discrete minimizers to the continuous one as the diameter of the triangles goes to zero (see Theorems  \ref{gamma-conv} and \ref{minima}).

We finally point out that, beyond the specificities of the matching problem for functional shapes,  the quality problem for discrete approximations concerns many numerical problems defined on surfaces. This work shows how the approximation quality depends on the discretization process used for the numerical approach. Moreover, $\Gamma$-convergence is the proper framework to establish the consistency of numerical results. Then, our approach can generalize to other problems with promising theoretical and numerical results.

\paragraph{Structure of the paper.} 
In Section \ref{BVmanifolds}, we present the theory of $BV$ functions on manifolds with boundary and adapt the classical results of approximation and compactness. In Section \ref{fvarifold}, we recall the framework of functional varifolds and the link with \cite{ABN}. In Section \ref{BVfunctional}, we set the continuous problem and prove the related existence result. 
Section \ref{Triangulations} is dedicated to the definition of admissible triangulations (Definitions \ref{admis-triang}) and the geometric conditions to guarantee the areas convergence (Hypothesis \ref{hyp0l2} and Proposition \ref{conv-area}). Section \ref{Discretization} defines discrete operators on triangulations in the framework of finite elements.
Finally, in Section \ref{GammaConvSection}, we consider the discrete problem and prove the $\Gamma$-convergence result (see Theorem  \ref{gamma-conv} and \ref{minima}).

\section{$BV$ functions on manifolds}\label{BVmanifolds}

In this section, we introduce the central notions and results about $BV$ functions on manifolds. To develop a much larger setting beyond our framework, a similar definition is given in \cite{BV_manifold}.

Although this work concerns approximation results on surfaces, we present here the $BV$ theory on a slightly more general framework where $X$ is a general compact $d$ dimensional manifold (that is not supposed to be a finite 2D compact submanifold on $\mathbb{R}^3$).

The definition of $BV$ functions depends on introducing a divergence operator (or, equivalently, a volume form) and a local notion of length. A Riemannian structure provides these two things. 

Let $X$ denote an oriented smooth (at least $C^1$) compact $d$-dimensional Riemannian manifold, possibly with boundary denoted $\dX$, and let $\oX$ be the associated Riemannian volume form. The boundary $\dX$ is supposed to be a $C^1$ compact $(d-1)$-dimensional manifold and we have $\oX(\partial X)=0$. Finally, let us denote
\[
X_0= X\setminus \dX
\] 
which is a $C^1$ manifold without boundary. When $X$ is without boundary, the previous construction gives $X_0=X$. We say that $f\in L^1(X)$ is a function of bounded variation on $X$ if
\[
|D_Xf|(X)= \sup\bigg\{\ \int_{X} f\mdiv_X(u)\oX\ |\  u\in \chi^1_c(X_0),\ \|u\|_\infty\leq 1\bigg\}<\infty
\]
where $\chi^1_c(X_0)$ denotes the set of $C^1$ vector fields $u:X \to TX$ on $X$ compactly supported in $X_0$ and $\mdiv_X$ is the divergence operator on $X$ defined by $$\mdiv_X(u)=\sum_{i=1}^d g(e_i,du(e_i))$$ where $(e_1,\cdots,e_d)$ is an orthonormal frame on $TX$. Here $\|u\|_\infty=\sup_{x\in X}g_x(u(x),u(x))^{\frac{1}{2}}$ where $g$ is the metric tensor associated with the Riemannian structure. 

We recall the integration by parts formula :
\begin{equation}
  \label{eq:13}
  \int_X f\mdiv_X(u)\oX=-\int_X u(f)\oX=-\int_Xg(\nabla f,u)\oX
\end{equation}
where $h\in C_c^1(X_0)$ and $u(f)$ denotes the derivative of $f$ along the vector fields $u$ that is defined by $[u(f)](x)=d_xf(u(x))$ for any $x\in X_0$. We retrieve the usual submanifold setting when considering the metric induced on the submanifold by the ambient space $\mathbb{R}^3$. 

The functional space $BV(X)$ endowed with the norm 
$$\|f\|_{BV(X)} = \|f\|_{L^1(X)} + |D_Xf |(X)$$
is a Banach space. 

\begin{defn}The space $BV(X)$ can also be equipped with the following notions of convergence, both weaker than the norm convergence:
\begin{enumerate}
			\item {\bf Weak-$\boldsymbol *$ topology}. Let $\{f_h\}_h\subset BV(X)$ and $f\in BV(X)$. We say that the sequence $\{f_h\}_h$ weakly-$*$ converges in $BV(X)$ to $f$ if
				\[f_h \overset{L^1(X)}{\longrightarrow} f \quad\mbox{and}\quad D_X f_h \overset{*}{\rightharpoonup}D_X f\,, \quad\mbox{as}\quad h \rightarrow \infty\]
				where $\overset{*}{\rightharpoonup}$ denotes the weak convergence in the space of measures on $X$.
			\item {\bf Strict topology}. Let $\{f_h\}_h\subset BV(X)$ and $f\in BV(X)$. We say that the sequence $\{f_h\}_h$  strictly converges to $f$ in $BV(X)$ if
				\[f_h \overset{L^1(X)}{\longrightarrow} f \quad\mbox{and}\quad |D_X f_h|(X) \longrightarrow |D_X f|(X)\,,\quad\mbox{as}\quad h\rightarrow \infty.
				\]
			Remark that 
			\[d(f,g) = \|f-g\|_{L^1(X)} +\Big||D_Xf|(X)-|D_Xg|(X) \Big|\]
		is a distance in $BV(X)$ inducing the strict convergence.
\end{enumerate}\end{defn}

\noindent We recall the main compactness result for functions of bounded variation extended to the manifold framework.

\begin{thm}[{\bf Compactness}]\label{CompBV}
Let $X$ be a smooth manifold and $\{f_h\}_h$ be a sequence of $BV(X)$ such that $\|f_h\|_{BV(X)}$ is uniformly bounded. Then $\{f_h\}_h$ is relatively compact in $BV(X)$ with respect to the  weakly-$*$ convergence.
\end{thm}

\begin{proof}
The proof consists in adapting that one given for the Euclidean case (see \cite{AFP}, Theorem 3.23) to a manifold with a boundary.

As $X$ is compact, we can consider a finite atlas $\{(U_i,\varphi_i)\}_{i=1,...,n}$ where $\{U_i\}_{i=1,...,n}$ is a finite open cover of $X$ (note that $\{U_i\cap X_0\}_{i=1,...,n}$ is a finite open cover of $X_0$) and  $\varphi_i : U_i \rightarrow V_i$ is at least a $C^1$ diffeomorphism  onto an open $V_i\subset \R^k$ ($k$ is the dimension of the manifold). Without loss of generality, we can suppose that each $V_i$ is a Lispchitz domain, and we can also consider a partition of unity $\{\eta_i\}_{i=1,...,n}$ on $X$ subordinate to the cover $\{U_i\}_{i=1,...,n}$.

Now, by Proposition 2.2 in \cite{BV_manifold}, as $\{f_h\eta_i\}_h \subset BV(U_i\cap X_0)$, then  $\{(f_h\eta_i)\circ \varphi_i^{-1}\}_h \subset BV(\varphi_i(U_i\cap X_0))$ and we can apply the classical theorem  (see \cite{AFP}, Theorem 3.23) to $\{(f_h\eta_i)\circ \varphi_i^{-1}\}_h$ for any local chart. The extension argument used in the classical compactness theorem allows us to define a subsequence converging in $L^1(V_i)$.

Then, via at most $n$ extractions of subsequences, we can define a subsequence (not relabeled) such that, for every $i$,    $\{f_h\eta_i\}_h$ weakly*-converges to some $f_i\in BV(U_i)$, so that $\{f_h\}$ weakly*-converges to $f = \sum_{i=1}^n f_i$ in $BV(X)$. 
\end{proof}

We now establish an approximation result of $BV$ functions by smooth functions. Interestingly, even if there exists several density results using smoothing through the heat kernel semi-group for geodesically complete Riemannian manifold (see \cite{miranda2007heat}, \cite{carbonaro2007note} and \cite{guneysu2013functions}), to the best of our knowledge, an approximation result in the case of a compact manifold with boundary does not seem to be available. We give below such a result~:
\begin{thm}\label{approx1}
  Let $X$ be an orientable Riemannian compact manifold with boundary $\partial X$ and let $X_0=X\setminus \partial X$. For any $f\in BV(X)$ there exists a sequence $\{f_h\}_h\subset C^1(X_0)$ such that 
\begin{equation}
f_h\to \rst{f}{X_0} \text{ in }L^1(X_0,\R)\quad
\text{and}\quad
|D_Xf|(X)=\displaystyle{\lim_{n\to\infty}\int_{X_0}|\nabla f_h| \oX}\,.
\label{eq:14.10.1a}
\end{equation}
\end{thm}

\begin{proof}
The proof shares similar ideas to the proof of the classical approximation result in the Euclidean case 
(see Theorem 3.9 in \cite{AFP}), Furthermore, it is based on the parameterization of the function by the local charts. 
When considering the problem in local charts, introducing a non-constant volume term introduces several new elements. The proof is detailed in the Appendix.
\end{proof}

In the following, we return to the case of a submanifold of $\R^3$ and show how the previous result can be improved. To achieve this goal, we need to introduce an extension operator to be able to expand the manifold and its signal in a consistent way.

Let $X$ denote a $C^p$ ($p\geq 2$) compact oriented 2 dimensional submanifold of $\R^3$  with non empty boundary denoted $\dX$. We denote $\bfn_{X_0}$ the $C^{p-1}$ vectors field of positively oriented normal along $X_0 = X \setminus \partial X$.  Note that $\bfn_{X_0}$ can be continuously extended on $X$ and denoted in that case $\bfn_X$. For any $x\in \partial X$, we can define the unit vector $\nu(x)$ pointing outward and orthogonal to $n_X(x)$ and $\partial X$.  
For sufficiently small $r>0$, we consider the open subset 
\[
	\Nc_r(X_0)=\{\ x+t\bfn_{X_0}(x)\ |\ x\in X_0, |t|<r\}
\]
and the $C^{p-1}$ diffeomorphism
$$\psi_{X_0}:  X_0\times]-r, r[ \rightarrow \Nc_r(X_0) \;, \quad (x,t)\mapsto x+t\bfn_{X_0}(x)$$
so that, considering its inverse $\psi_{X_0}^{-1}=(\pi_{X_0},t_{X_0})$, we have 
$$ \forall z\in \Nc_r(X_0) \quad z=\pi_{X_0}(z)+t_{X_0}(z)\bfn_{X_0}(\pi_{X_0}(z))$$
where $\pi_{X_0}(z)$ is the orthonormal projection of $z$ on $X_0$ and $|t_{X_0}(z)|$ is the distance of $z$ to $X_0$.

\noindent Now, there exists $\eta_0>0$ small enough, such that the mapping 
$$\Ext:\partial X\times ]-\eta_0,\eta_0[\times ]-r,r[\to \mathbb{R}^3 $$
\begin{equation}\label{ext}
\Ext(x,s,t)=
\begin{cases}
	x_0+t\bfn_{X_0}(x_0)\text{ with }x_0\doteq \pi_{X_0}(x+s\nu(x))&\text{ if }s<0\\
	x+s\nu(x)+t\bfn_X(x)&\text{otherwise} 
\end{cases}
\end{equation}
is well defined and is a $C^{p-1}$ diffeomorphism on an open neighborhood of $\partial X$ in $\mathbb{R}^3$  (see Figure \ref{fig:ext}).
\begin{figure}[h!]
	\centering
	\begin{subfigure}[t]{8cm}
	\centering
		\begin{tikzpicture}[scale=.8]
\begin{axis}[axis lines=none,
	axis equal=true,
view={27}{20},
after end axis/.code={
	\filldraw[opacity=0] (Xx)  node[above, opacity=0] {$\Ext(x,s,t)$} circle (1pt);
		\draw[gray] (tmin) -- (tmax) node[left]{$t$};
		\draw[thick,->] (axis cs:2,0,4/5)(X00)  -- (n)node[left] {$\bfn_{X_0}(x_0)$} ;
		\filldraw (X)  node[right] {$\Ext(x,s,t)$} circle (1pt);
		\draw[thick,->] (X0)  node[left]{$x$} -- (nu)node[below right] {$\nu(x)$} ;
	},rotate=10,
]

 \def\t{.6};
    \def\s{-2};
    \def\tmin{-1};
    \def\tmax{1.5};
    \def\smax{-2.5};
    		\coordinate (X0) at (axis cs:2,0, 4/5 );
    		\coordinate (X00) at (axis cs:0,0, 0 );
    		\coordinate (n) at (axis cs:0,0,1/2 );
		\coordinate (tn) at (axis cs:2-4/5*\t,0, 4/5+1*\t );
		\coordinate (nu) at (axis cs:2+1*1.113552873/2,0,4/5+4/5*1.113552873/2);
		\coordinate (snu) at (axis cs:2+1*\s,0,4/5+4/5*\s);
		\coordinate (X) at (axis cs:0,0,1.2);
		\coordinate (smax) at (axis cs:2+1*\smax,0,4/5+4/5*\smax);
		\coordinate (tmin) at (axis cs:0,0,\tmin );
		\coordinate (tmax) at (axis cs:0,0,\tmax);

		\filldraw[gray] (snu) circle (1pt);
		\draw[thick,dashed] (snu) node[right]{$x+s\nu(x)$} -- (X00)node[left] {$x_0$} ;
		\draw[gray] (X0) -- (smax)  node[left]{$s$};
		\filldraw (X00) circle (1pt);
    \addplot3[patch,patch refines=4,
		mesh,
		patch type=biquadratic,
	    fill opacity=.25,
	    opacity=.25,
	] 
    table[z expr=.2*x^2-.2*y^2]
    {
        x  y
        -2 -2
        1  -2
        1  2
        -2 2
        0  -2
        2  0
        0  2
        -2 0
        0  0
    };

 \def\tp{.6};
    \def\sp{.75};
		\coordinate (Xx) at (axis cs:2-4/5*\tp+1*\sp,0, 4/5+1*\tp +4/5*\sp);
   \end{axis}

\end{tikzpicture}
		\caption{$s<0$ }
	\end{subfigure}
	\begin{subfigure}[t]{8cm}
	\centering
		\begin{tikzpicture}[scale=.8]
\begin{axis}[axis lines=none,
	axis equal=true,
view={27}{20},
after end axis/.code={
		\draw[dashed] (snu) -- (X) node[above] {$\Ext(x,s,t)$};
		\filldraw (X) circle (1pt);
		\draw[dashed] (tn) --  (X);
		\draw[gray] (tmin) -- (tmax) node[left]{$t$};
		\draw[gray] (X0) -- (smax)  node[right]{$s$};
   		\draw[thick,->] (X0) node[left]{$x$}  -- (n)node[below left] {$\bfn_X(x)$} ;
		\draw[thick,->] (X0) -- (nu)node[below right] {$\nu(x)$} ;
	},
,rotate=10,]
    \addplot3[patch,patch refines=4,
	    mesh,
		patch type=biquadratic] 
    table[z expr=.2*x^2-.2*y^2]
    {
        x  y
        -2 -2
        1  -2
        1  2
        -2 2
        0  -2
        2  0
        0  2
        -2 0
        0  0
    };
 \def\t{.6};
    \def\s{.75};
    \def\tmin{-1};
    \def\tmax{1};
    \def\smax{1};
    		\coordinate (X0) at (axis cs:2,0, 4/5 );
    		\coordinate (n) at (axis cs:2-4/5*1.113552873/2,0, 4/5+1*1.113552873/2 );
		\coordinate (tn) at (axis cs:2-4/5*\t,0, 4/5+1*\t );
		\coordinate (nu) at (axis cs:2+1*1.113552873/2,0,4/5+4/5*1.113552873/2);
		\coordinate (snu) at (axis cs:2+1*\s,0,4/5+4/5*\s);
		\coordinate (X) at (axis cs:2-4/5*\t+1*\s,0, 4/5+1*\t +4/5*\s);
		\coordinate (smax) at (axis cs:2+1*\smax,0,4/5+4/5*\smax);
		\coordinate (tmin) at (axis cs:2-4/5*\tmin,0, 4/5+1*\tmin );
		\coordinate (tmax) at (axis cs:2-4/5*\tmax,0, 4/5+1*\tmax );

   \end{axis}

\end{tikzpicture}
		\caption{$s\geq 0$}
	\end{subfigure}
		\caption{The map $\Ext$}
	\label{fig:ext}
\end{figure}

\noindent Moreover, $\Ext$ maps $\partial X\times \{0\}\times\{0\}$ to the boundary $\partial X$ of $X$ and for $0<\eta<\eta_0$
$$X^{-\eta}= X\setminus \Ext(\partial X\times ]-\eta,0]\times \{0\})$$
is a compact $C^{p-1}$ submanifold of $X$ whereas
\begin{equation}\label{extension-eta}
X^{+\eta}=X\cup \Ext(\partial X\times ]0,+\eta]\times \{0\})
\end{equation}
is a $C^{p-1}$ compact 2 dimensional submanifold of $\mathbb{R}^3$ extending $X$ along its boundary.

We can now prove the following approximation result:

\begin{thm}\label{approx2}
 Let $X$ be a $C^p$ ($p\geq 2$) compact oriented 2D submanifold of $\R^3$  with non empty boundary denoted $\dX$.  Let $f\in BV(X)$ and let $\epsilon>0$. Then there exists $\eta>0$ and $\tilde{f}\in C^{p-1}(\xeta)$ such that 
\[
\int_{X}|f-\tilde{f}|\,d\cH^2+\left||D_Xf|(X)-|D_X\tilde{f}|(X)\right|\leq \epsilon
\]
and 
\[
	\int_{\xeta\setminus X^{-\eta}}|\tilde{f}|+|\nabla_{\xeta }\tilde{f}|\,d\cH^2\leq \epsilon\,.
\]
\end{thm}

\begin{proof}
By Theorem \ref{approx1}, there exists $f'\in C^{p-1}(X_0)$ such that
\[
\int_{X}|f-f'|\,d\cH^2+\Big| |D_Xf|(X)-|D_Xf'|(X)\Big|\leq \epsilon\,.
\]
Moreover, there exists $0<\eta'<\eta_0$ such that 
$$\int_{X\setminus X^{-\eta'}}|f'|+|\nabla_X f'|\,d\cH^2\leq \epsilon$$
Now, considering for $\eta=\eta'/3$ the function $f'':\xeta\to\mathbb{R}$ defined as
\[ f''( z)=
  \begin{cases}
f'(z) & \text{ if }z\in X^{-\eta}\,,\\
f'(\Ext(x,-2\eta -s,0)) &\text{ if }z\in \xeta \setminus X^{-\eta}  \text{ and where } z=\Ext(x,s,0)\,.
\end{cases}
\]
Let $\delta>0$, we can easily check that for $\eta'$ small enough
\[
\int_{\xeta\setminus X^{-\eta}}|f''|+|\nabla_X f''|\,d\cH^2\leq (1+\delta)\times 2\int_{X\setminus X^{-\eta'}}|f'|+|\nabla_X f'|\,d\cH^2
\]
since by considering the change of variable induced by $\Ext$ we notice that $(x,s)\to \pi_X(x+s\nu(x))$ has a determinant converging to $1$ when $(x,s)$ convergence to a point on  $\partial X\times\{0\}$ with $s<0$. 

Moreover, we verify that $\rst{f''}{X^{-\eta}}\in C^{p-1}(X^{-\eta})$ and $\rst{f''}{\overline{\xeta\setminus X^{-\eta}}}\in C^{p-1}(\overline{\xeta\setminus X^{-\eta}})$ where the intersection $X^{-\eta}\cap \overline{\xeta\setminus X^{-\eta}}=\partial X^{-\eta}$ is a $C^{p-1}$ one dimensional submanifold. Applying a smoothing in the vicinity of $\partial X^{-\eta}$ we can obtain $\tilde{f}\in C^1(\xeta,\mathbb{R})$ such that  $\tilde{f}=f''$ on $ X^{-\eta} $ and 

$$\int_{\xeta\setminus X^{-\eta}}|\tilde{f}|+|\nabla_X \tilde{f}|\,d\cH^2\leq \int_{\xeta\setminus X^{-\eta}}|f''|+|\nabla_X f''|\,d\cH^2+\epsilon$$
so that, choosing $\delta=1$ and $\eta'$ small enough, we have
$$\int_{\xeta\setminus X^{-\eta}}|\tilde{f}|+|\nabla_X \tilde{f}|\,d\cH^2\leq 5\epsilon\,,$$
and we get immediately
$$\int_{X}|f-\tilde{f}|\,d\cH^2+\left||D_Xf|(X)-|D_X\tilde{f}|(X)\right|\leq 
\int_{X}|f-f'|\,d\cH^2+\Big||D_Xf|(X)-|D_Xf'|(X)\Big|+5\epsilon\leq 6\epsilon$$
that proves the result.
\end{proof}

\begin{rem}[{\bf $L^2$ and $H^1$ norms}]\label{rem-approx-L2} Theorems \ref{approx1} and \ref{approx2} can be established, by similar arguments, for the $L^2$ and  $H^1$ norms.  We refer to \cite{Hebey} for an introduction to Sobolev spaces on manifolds.
\end{rem}

\section{Functional varifolds}\label{fvarifold}

This section presents the main definitions and results for functional varifolds as introduced in \cite{ABN}. We recall that this framework is a generalization of the varifold theory \cite{Si} to spaces with a scalar component; this allows the representation of geometric shapes equipped with a signal in the framework of measure theory. We refer to  \cite{Si, ABN} for a more detailed presentation of the respective theories.  

The functional approach considers mathematical objects, called fshapes,  containing geometrical and functional components. 
As detailed below, the role of fshapes in the functional varifolds framework is the same as that of manifolds in the varifold theory; they allow one to define measures supported on surfaces and are the bridge between geometry and measure theory.

In the following, we work with smooth surfaces and denote by $X$ a generic $2$-manifold verifying the following properties :

\begin{hyp}\label{cond_surf}
$X$ denotes a $2$-submanifold of $\R^3$ (surface) with boundary $\partial X$ and which is smooth ($C^2$  at least), oriented, connected and compact.
The smoothness assumption implies that its interior, denoted by $X_0$, is a two-dimensional smooth manifold, and $\partial X$ is a one-dimensional manifold of the same regularity.
\end{hyp}

\begin{defn} [{\bf fshape}] We define a fshape as a couple $(X,f)$ where $X$ is a surface verifying Hypothesis \ref{cond_surf} and $f\in BV(X)$ is a signal defined on $X$.  
\end{defn}

Of course,  the penalty term in the matching problem defines the regularity of the signal. In the following, we consider fshapes endowed with $L^2$ or $H^1$ signal when studying the problem with $L^2$ or $H^1$  penalty terms, respectively.

Similarly to the classical theory of varifolds,  we can define a functional varfiold via the dual of $\test$, which denotes the closure, with respect to the $C^1$ norm, of the set of $C^1$ functions with compact support on $\R^3\times G(3,2)\times \R$. We denote by $G(3,2)$ the Grassmannian of the non-oriented 2-dimensional linear subspaces of $\R^3$.

\begin{defn} [{\bf fvarifolds}] A 2-dimensional functional varifold (fvarifold) is any operator $\mu$
  belonging to $(\test)'$, the dual space  of $\test$, containing all bounded linear real-valued map on $\test$.  In the following, we consider the following norm for fvarifolds :
\begin{equation}\label{dual-emb}
\normvar{\mu} = \sup\,\{\mu(\varphi)\,:\,\|\varphi\|_{\test}\leq 1\}\,.
\end{equation}

\end{defn}
We can associate a fvarifold to a given fshape $(X,f)$ by considering the following measure
$$\var{X}{f} = \Haus \res X \otimes \delta_{T_X(x)} \otimes  \delta_{f(x)} $$
that acts as a linear functional in the following way :
\begin{equation}\label{action-rect}
\var{X}{f}(\varphi) = \int_X\varphi(x, T_x X, f(x)) d \Haus(x)\quad \forall \varphi\in \test\,,
\end{equation}
where $T_X(x)$ denotes the tangent space to $X$ at $x$ and $\Haus$ is the $2$-dimensional Hausdorff (or volume) measure.

Finally, we prove some lemmas that will be useful in the following, concerning some convergence properties for fvarifolds supported on fhsapes.
In particular, the convergence to the null fvarifold depends on the geometric component :
\begin{lem}\label{null-var}
Let $\{X_h\}$ be a sequence of surfaces such that $\Haus(X_h)\rightarrow 0$. Then $\var{X_h}{f_h}$ converges to the null fvarifold for every sequence $\{f_h\}$ of signals.
\end{lem}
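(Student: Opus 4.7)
The plan is a one-line estimate once we unwind what convergence to the null varifold means in this context. By definition, $\var{X_h}{f_h}$ converges to the null varifold if and only if, for every test function $\varphi \in C_0^1(\R^3 \times G(3,2) \times \R)$, we have $\var{X_h}{f_h}(\varphi) \to 0$ as $h \to \infty$ (i.e., weak-$*$ convergence in the dual space $(C_0^1)'$).

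Fix such a $\varphi$. Because $\varphi$ is continuous and vanishes at infinity, it is bounded on the whole of $\R^3 \times G(3,2) \times \R$; set $M = \|\varphi\|_\infty < \infty$. Then, directly from the definition
\[\var{X_h}{f_h}(\varphi) = \int_{X_h}\varphi(x, T_xX_h, f_h(x))\, d\Haus(x),\]
a brutal bound gives
\[\left|\var{X_h}{f_h}(\varphi)\right| \leq \int_{X_h} \left|\varphi(x, T_xX_h, f_h(x))\right| d\Haus(x) \leq M \cdot \Haus(X_h).\]

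Since $\Haus(X_h) \to 0$ by hypothesis, the right-hand side tends to zero, which yields the desired convergence.

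There is really no obstacle here: the bound is uniform in the signal $f_h$ and in the tangent plane field $T_xX_h$ precisely because $\varphi$ is assumed bounded (vanishing at infinity), so the signals play no role in the estimate. This is also the reason the statement holds for \emph{every} sequence of signals $\{f_h\}$, with no boundedness or regularity assumption imposed on them.
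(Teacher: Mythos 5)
Your proof is correct and is exactly the ``easy'' argument the paper alludes to (the paper states the lemma without proof, noting only that it follows directly from the definition of fvarifold): the uniform bound $|\var{X_h}{f_h}(\varphi)|\leq \|\varphi\|_\infty\,\Haus(X_h)$ is the whole content. The only cosmetic remark is that the paper's sequences are indexed by $h\to 0$ rather than $h\to\infty$, which does not affect the argument.
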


\begin{proof} For every $h$ and for every $\test$ with $ \|\varphi\|_{C_0(\R^3\times G(3,2)\times \R)}\leq 1$ we obtain that $|\var{X_h}{f_h}(\varphi)|\leq \mathcal{H}^2(X_h)$ which implies that $\|\var{X_h}{f_h}\|$ converges to zero.
\end{proof}

The following lemma links the convergence of signals and the weak-$*$ convergence of fvarifolds :

\begin{lem}\label{conv-var}
If $f_h \rightarrow f$ in $L^1(X)$ then 
$\var{X}{f_h}\overset{*}{\rightharpoonup} \var{X}{f}$
and   $\normvar{\var{X}{f_h}- \var{X}{f}}\rightarrow 0$.
\end{lem}

\begin{proof} For every $\varphi \in \test$  with $ \|\varphi\|_{\test}\leq 1$ we get 
$$ |\var{X}{f_h}(\varphi) - \var{X}{f}(\varphi) | \leq \int_X|\varphi(x, T_x X, f_h(x)) - \varphi(x, T_x X, f(x))| d \Haus(x) \leq \|f_h-f\|_{L^1(X)}$$ 
which proves the results by taking the supremum and using the $L^1$ convergence. 
\end{proof}

\begin{rem}[{\bf The choice of the fvarifold norm}]\label{choiceW}
This work uses the dual norm as a distance between fvarifolds. However, in \cite{ABN}, another metric for fvarifolds is defined by using the framework of  Reproducing Kernel Hilbert Spaces (RKHS) \cite{Younes}. We refer to \cite{ABN} for a general definition, and we recall that their framework allows them to define the following dual product for fvarifolds supported on fshapes:

\begin{equation}\label{var-prod}\langle \mu_{(X,f)}, \mu_{(Y,g)} \rangle_{W'} = \int_X \int_Y \,k_e(x,y)k_t(T_x X, T_y Y)k_f(f(x), g(y)) \, d \Haus(x) d \Haus(y)\,,
\end{equation}
with
\begin{equation}\label{kernels}
	k_e(x,y)= e^{-\frac{\|x_1-x_2\|^2}{\sigma_e^2}}\,,\; k_t(T_1, T_2)= e^{-\frac{2(1-\langle \bfn_{T_1}, \bfn_{T_2} \rangle^2)}{\sigma_t^2}}\,,\;k_f(a,b)= e^{-\frac{|f_1-f_2|^2}{\sigma_f^2}}
\end{equation}
where $\sigma_e, \sigma_t, \sigma_f$ are three positive constants and $\bfn_{T}$ represents the unit normal vector to $T$. In particular, $W'$ denotes the dual space of $W$, the RKHS associated with the kernel $k_e\otimes k_t\otimes k_f$ (see Propositions 2 and 4 in \cite{ABN}). As $W$ is continuously embedded into $\test$, its dual norm defines a metric for fvarifolds.

The choice of this metric has many advantages. It allows them to prove an existence result for the matching problem with $L^2$ signals (see Section \ref{BVfunctional}). Moreover, in numerical applications, using the Gaussian kernel allows the localization of the matching at a given scale.  

In this work, we decided to consider the matching problem defined for $BV$ or $H^1$ signals and use the standard dual norm for fvarifolds. This framework is strong enough to prove the existence of optimal solutions and a $\Gamma$-convergence result. 
Throughout the paper, we will demonstrate how to adapt the results to the original framework introduced in \cite{ABN}.

\end{rem}

\section{Existence of optimal solutions for the matching problem}\label{BVfunctional}

In this section, we define the matching energy between two fshapes and prove an existence result for the optimal solution.

Let be $X$ a surface and $(Y,g)$ a target fshape. We consider the following energy defined for a generic fshape $(X,f)$
\begin{equation}
	E(f) = \|f\|_{BV(X)} + \frac 1 2 \normvar{\mu_{(X,f)}-\mu_{(Y,g)}}^2 \,,
\end{equation}
and we aim to solve  the minimization problem
\[
	\inf_{f\in BV(X)} E(f).
\]

We recall that we optimize only with respect to the signal, which implies that the initial and optimal configurations have the same geometric support. However, the geometry is taken into account in the attachment term. 

\begin{thm}\label{existence} Let $(Y,g)$ a given fshape and $X$ a $2$-manifold verifying  Hypothesis \ref{cond_surf}. Then, there exists at least one solution to the problem
\begin{equation}\label{P}
\underset{f\in BV(X)}{\inf}\; E(f)\,.
\end{equation}
\end{thm}

\begin{proof} Let $\{f_h\}_h$ be a minimizing sequence belonging to $BV(X)$. We can suppose that $\|f_h\|_{BV(X)}$ is uniformly bounded and, by Theorem \ref{CompBV}, $\{f_h\}_h$ converges (up to a subsequence) to some $f\in BV(X)$ with respect to weak-$*$ topology. 
The result follows by remarking that the fvarifold norm is continuous with respect to the $L^1$ topology (Lemma \ref{conv-var}) and that the $BV$ norm is lower semicontinuous with respect to the $L^1$ topology.
\end{proof}

\begin{rem}[{\bf The $H^1$ model}]\label{sob1} We get the same result if we consider signals belonging to $H^1(X)$ instead of $BV(X)$.  It follows from the fact that the unity ball of $H^1(X)$ is compact with respect to the weak topology. Moreover,  $H^1(X)$ is compactly embedded in $L^2(X)$, which implies (up to a subsequence) the $L^1$ convergence of the minimizing sequence. 
\end{rem}


\begin{rem}[{\bf $L^2$ model in \cite{ABN}}]\label{l2-model-exist}

The $L^2$ model is defined (see \cite{ABN}, part I) via the following matching energy 
\begin{equation}\label{Fl2}
E(f) = \frac{\gamma_f}{2} \|f\|_{L^2(X,\R)}^2 + \frac{\gamma_W}{2} \normvarW{\var{X}{f}-\var{Y}{g}}^2
\end{equation}
where  $\gamma_f, \gamma_W$ are two positive constants, and the fvarivold norm is induced by \eqref{var-prod}. 
We point out that, for every minimizing sequence $\{f_h\}_h$, a bound on $E(f_h)$  guarantees only the $L^2$ weak compactness for the signals, which is not enough to get the semicontinuity of the fvarifold term (a result similar to Lemma \ref{conv-var} holds for the $W'$-norm with respect to the $a.e.$ convergence of the signals). This justifies the choice of the RKHS-based metric described in Remark \ref{choiceW}, whose properties play a central role in the existence result :

\begin{thm}[Proposition 7 in \cite{ABN}]\label{l2-ex}

Let $X,Y$ be two finite volume bounded 2-rectifiable subsets of $\R^3$. Let us assume that  $W$ is continuously embedded in $C^2_0(\R^3\times G(3,2) \times \R)$ and $g\in L^2(Y)$.

If the ratio $\gamma_f/\gamma_W$ is large enough, 
then  there exists  at least one solution to the minimization problem
\[\inf_{f\in L^2(X)} E(f)\]
and every minimizer  belongs to $L^\infty(X)$. Moreover, if $X$ is a $C^p$ surface and $W\hookrightarrow C_0^m(\R^3\times G(3,2) \times \R)$ with $m\geq \max\{p,2\}$, then every minimizer belongs to $C^{p-1}(X)$.

Finally, there exists a constant $C>0$ (independent of $X$ and $Y$) such that,  every  minimizer verifies
\begin{equation}\label{bound-min}
\|f\|_{L^\infty(X)} \leq C\frac{\gamma_W}{\gamma_f}(\Haus(X)+ \Haus(Y))\,.
\end{equation}
\end{thm}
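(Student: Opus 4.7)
The proof goes via the direct method of the calculus of variations, in three stages: coercivity, lower semicontinuity, and regularity.

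\textbf{Stage 1: coercivity and weak compactness.} Because each of the kernels in \eqref{kernels} is bounded by $1$, formula \eqref{var-prod} yields
\[0\le \langle \var{X}{f},\var{Y}{g}\rangle_{W'}\le \Haus(X)\Haus(Y),\qquad 0\le \|\var{X}{f}\|_{W'}^2\le \Haus(X)^2,\]
uniformly in $f$. Expanding $\|\var{X}{f}-\var{Y}{g}\|_{W'}^2$ I would obtain $E(f)\ge \tfrac{\gamma_f}{2}\|f\|_{L^2(X)}^2 - C$, where $C$ depends only on $X$, $Y$ and $g$. Any minimizing sequence $\{f_n\}$ is therefore bounded in $L^2(X)$, so up to extraction $f_n\rightharpoonup f^\star$ weakly in $L^2(X)$.

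\textbf{Stage 2: lower semicontinuity (the main obstacle).} The squared $L^2$-norm is weakly lower semicontinuous by standard Hilbert space theory, but the attachment term is \emph{not} convex in $f$ because of the Gaussian factor $k_f$. This is the serious point, and it is exactly where the largeness of $\gamma_f/\gamma_W$ has to be used. My plan is to argue by truncation: pick $M$ of the same order as the bound \eqref{bound-min}, and show that $T_M f_n := \mathrm{sign}(f_n)\min(|f_n|,M)$ satisfies $E(T_M f_n)\le E(f_n)$. The decrease of $\|f\|_{L^2}^2$ on the level set $\{|f_n|>M\}$ must dominate the maximum possible change of the attachment term, which I would control using $|k_f|\le 1$ and the global Lipschitz continuity of $k_f$; the constant $M$ has to be large enough for this to be automatic, and the required threshold is precisely what forces a lower bound on $\gamma_f/\gamma_W$. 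The truncated minimizing sequence is now uniformly bounded in $L^\infty(X)$, and a further extraction (Vitali/Dunford-Pettis, exploiting this $L^\infty$-bound together with weak $L^2$-convergence) produces a subsequence converging $\Haus$-a.e.\ to $f^\star$ on $X$. At this point Lemma~\ref{conv-var}, representation \eqref{var-prod}, and dominated convergence give
\[\lim_n \|\var{X}{f_n}-\var{Y}{g}\|_{W'}^2 = \|\var{X}{f^\star}-\var{Y}{g}\|_{W'}^2,\]
so $E(f^\star)\le \liminf_n E(f_n)$ and $f^\star$ is a minimizer.

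\textbf{Stage 3: regularity and the $L^\infty$-bound.} For any minimizer $f$ the first variation $\tfrac{d}{dt}\big|_{t=0} E(f+t\phi)=0$, combined with \eqref{var-prod}, gives a pointwise identity of the form $\gamma_f\, f(x) = \gamma_W\,\Phi[f](x)$, where $\Phi[f](x)$ is an integral over $X$ and $Y$ of the bounded factors $k_e,k_t$ against $\partial_1 k_f(f(x),\cdot)$. Since $\partial_1 k_f$ is a Gaussian times a polynomial, $|\Phi[f](x)|\le C(\Haus(X)+\Haus(Y))$ with $C$ independent of $X,Y,f$, which yields exactly \eqref{bound-min}. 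The higher regularity $f\in C^{p-1}(X)$ then follows by a bootstrap: differentiating the Euler-Lagrange identity gives each $k$-th derivative of $f$ in terms of derivatives of $f$ of order $\le k-1$ and derivatives of $k$ and of the Gauss map of $X$, both of which are regular by the hypotheses $W\hookrightarrow C_0^m$ and $X\in C^p$.

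The step I expect to be the real obstacle is Stage~2: formalizing the truncation so that the loss in the attachment term is always majorized by the gain in the $L^2$-term, and so that the resulting $L^\infty$-bound is compatible with the extraction of an a.e.\ convergent subsequence. Everything else is by now fairly standard compactness and bootstrapping.
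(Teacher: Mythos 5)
Your Stage 2 contains a genuine gap, and it is exactly the one the paper warns about just after the statement: a minimizing sequence that is bounded in $L^\infty(X)$ and converges weakly in $L^2(X)$ need \emph{not} have an $\Haus$-a.e.\ convergent subsequence. The standard counterexample $f_n(x)=\sin(nx)$ is uniformly bounded and converges weakly to $0$, yet no subsequence converges a.e.; neither Vitali nor Dunford--Pettis rescues this, since they concern equi-integrability and weak $L^1$ compactness, not pointwise convergence. Because the attachment term is built from the nonconvex Gaussian factor $k_f(f(x),g(y))$, it is continuous under a.e.\ convergence (Lemma \ref{conv-var}) but is \emph{not} weakly lower semicontinuous in $f$ — an oscillating sequence can strictly decrease it in the limit. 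So your direct method in $L^2(X)$ collapses precisely at the point you yourself identify as the main obstacle: you cannot pass to the limit in $\|\var{X}{f_n}-\var{Y}{g}\|_{W'}^2$.

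The proof the paper relies on (Proposition 7 and Lemma 2 of \cite{ABN}, sketched in Section \ref{l2-model-exist}) takes a different route around this. One relaxes the problem to the class $\mathcal{M}^X$ of measures of Definition \ref{MX} and minimizes the relaxed energy $\tilde E$ of \eqref{functional-measure}. The $L^\infty$ bound on a minimizing sequence (which your truncation idea could indeed supply) makes the fvarifolds $\var{X}{f_n}$ tight, Prokhorov's theorem gives weak-$*$ convergence to some $\mu_\infty\in\mathcal{M}^X$, and $\tilde E$ \emph{is} lower semicontinuous for that convergence. The genuinely hard step, which your proposal omits entirely, is then to show that the relaxed minimizer $\mu_\infty$ is represented by an actual function, i.e.\ $\mu_\infty=\var{X}{f^*}$ for some $f^*\in L^2(X)$: this amounts to proving that the disintegration of $\mu_\infty$ over $X$ is a Dirac mass on each fiber, and it is here — via the Implicit Function Theorem applied to the fiberwise first-order conditions — that the largeness of $\gamma_f/\gamma_W$ is actually used, not in a truncation estimate. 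Your Stage 3 (Euler--Lagrange identity, the bound \eqref{bound-min}, and the bootstrap for $C^{p-1}$ regularity) is consistent with the cited argument, but it presupposes the existence of a minimizing \emph{function}, which is exactly what Stage 2 fails to deliver.
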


The proof is based on the relaxation of the energy to the class $\mathcal{M}^X$ defined as follows:
\begin{defn}\label{MX} $\mathcal{M}^X$ is the class  of the Borel finite measures $\nu$  on $\R^3\times G(3,2) \times \R$ such that 
\[\int \varphi(x,V) d \nu(x,V,f) = \int_X \varphi(x,T_x X) d \Haus(x) \quad \forall \, \varphi \in C_c(\R^3\times G(3,2))\,.\]
Note that $\var{X}{f}\in \mathcal{M}^X$ for every fshape $(X,f)$.
\end{defn}

Then, the energy $E$ can be relaxed to the following functional
\begin{equation}\label{functional-measure}
 \tilde{E}:\mathcal{M}^X\rightarrow \R\;,\quad \tilde{E}(\nu) =  \frac{\gamma_f}{2} \int |f|^2 d \nu + \frac{\gamma_W}{2} \normvar{\nu-\var{Y}{g}}^2
\,
\end{equation}
and it holds : 
\begin{equation}\label{en-en}
E(f) = \tilde{E}(\var{X}{f})\,.
\end{equation}

The relaxed energy provides compactness for minimizing sequences in the space of measures. It can be shown that the minimizing measure $\nu^*$ of $\tilde{E}$ is associated with a fshape, so that $\nu^* = \var{X}{f^*}$ for some $f^*\in L^2(X)$. 
The proof relies on the Implicit Function Theorem, which needs the hypothesis on $\gamma_f/\gamma_W$. We refer to  Proposition 7 and Lemma 2 in \cite{ABN} for more details. 

However, the $L^2$ model has two main issues. Firstly, the existence result depends on the weights used to define the energy. Moreover, the $L^2$ penalty does not prevent some oscillating configurations for the optimal signal.
For this reason, we introduced a penalty on the signal derivative that justifies the $BV$ and $H^1$ models studied in this work. 

\end{rem}

\section{Surfaces and triangulations}\label{Triangulations}

This section is dedicated to defining suitable triangulations of surfaces to ensure a good approximation of continuous functions by their discretization. 

As explained in the Introduction, the quality of approximation and the error estimates highly depend on the geometric properties of triangulations.
The famous example of the Schwartz lantern (see \cite{Bronstein} Section 3.9) exhibits a sequence of polyhedral surfaces converging to the cylinder in the Hausdorff metric and whose areas diverge. This example points out that triangulations must verify some specific hypothesis to represent a geometrically consistent surface approximation. 

Section \ref{tri-adm} recalls some general facts about triangulations and defines the set of admissible triangulations in Definition \ref{admis-triang}. In Section \ref{conv-areas}, we give a sufficient condition (Hypothesis \ref{hyp0l2}), ensuring the convergence of areas for admissible triangulations. Finally, in Section \ref{signal-admi}, we define the sampling method for signals and the error estimate for Sobolev norms.

\subsection{Triangulations of a surface}\label{tri-adm}

As $X$ verifies Hypothesis \ref{cond_surf}, following \cite{Alexandrov,Resh,Polthier}, we give the following definition of triangulation: 

\begin{defn}[{\bf Triangulations}]\label{def.tri}
A triangulation $\tri$ is a two-dimensional manifold
(with boundary) consisting of a finite  set $\Delta_\tri$ of affine
triangles  such that:
\begin{enumerate}
\item  any point $p \in\tri$ lies in at least one triangle $T\in \Delta_\tri$;
\item each point $p \in\tri$ has a neighbourhood that intersects only
finitely many triangles of $\Delta_\tri$;
\item  the intersection of any two non-identical triangles $T,T'\in \Delta_\tri$ is
either empty or consists of a common vertex or edge.
\end{enumerate}
In the following, we denote by $T$  a  generic triangle of $\Delta_\tri$ and by $\partial \tri$ the boundary of the manifold $\tri$. The vertices of $\tri$ are the vertices of the triangles belonging to  $\Delta_\tri$. The diameter of $\tri$ is defined as follows :
\begin{equation}\label{eq.diam}
	\diam_\tri = \max_{T \in \Delta_\tri} \{\diam(T)\}.
\end{equation}
We also assume that every triangulation is regular, which means that
\begin{equation}\label{regular-tri}
	\frac{h_T}{\rho_T}\leq C \quad \quad \forall\, T\in \Delta_\tri
\end{equation}
for some $C>0$, where $h_T$ is the diameter of $T$ and  $\rho_T$ is the  diameter of the sphere
inscribed in $T$.
\end{defn}

We define the distance function to $X$ as the following function :
\[
\forall x\in \R^3\;,\quad d_X(x) =  d(x,X) = \inf_{y\in X}\, |x-y|\,.\] 
For every $x\in \R^3$, we call  (if it exists)  projection of $x$ on $X$ every point $\pi_X(x)\in X$ such that  $d_X(x)=  |x-\pi_X(x)|$.  

We also recall that the Hausdorff distance between  two surfaces $X,Y\subset \R^3$ is defined as 
\[
d_{\mathcal H}(X,Y)=\max \Big\{ \sup_{x\in X} d_Y(x), \sup_{y\in Y} d_X(y)  \Big\}.
\]

\begin{defn}[\bf Tubular neighborhoods] 
Let $X$ be a surface satisfying Hypothesis \ref{cond_surf}. We denote by $U_r(X)$ the subset of $\R^3$ of the form
	\[U_r=\,\{\;x\in \R^3\;|\; d_X(x) < r\;\}\,\]
such that every point  $x\in U_r$ admits a unique projection $\pi_X(x)\in X$. We refer to \cite{Federer} for the proof of the existence of such a tubular neighborhood $U_r$ for some $r>0$.

To guarantee the injectivity of the projection, we introduce the following tubular neighborhood
 \begin{equation}\label{eq.N}
 \Nc_r(X) \,=\,\{x+t\normM(x)\,:\,t\in ]-r,r[,\,x\in X \} \subset U_r\,  
 \end{equation}
where we denote by  $\normM(x)$ the unit normal vector to $X$ at $x$.
Then, we have $x\in \Nc_r(X)$ if and only if  $d_X(x)<r$ and
\begin{equation}\label{proj_M}
x=\pi_X(x)+ d_X(x) \normM(\pi_X(x))\,.
\end{equation}

Then, we can split every triangulation into two parts to isolate the set ($\tin$) of points that can be bijectively projected onto the surface:
\begin{equation*}
\tin = \tri \cap  \Nc_r(X)\quad  \text{ and } \quad \tout = \tri \cap\Nc_r(X)^c\,.
\end{equation*}
\end{defn}

Generally, a triangulation is not in bijection with a surface with a smooth boundary through the normal projection. The bijectivity can fail close to $\partial X$ because of the curvature of $\partial X$ as depicted in Figure \ref{tri_inscribed}. Locally, the normal projection of $\partial X$ on a hyperplane must not be a line, and it is impossible to project $\partial X$ on the edge of a triangle.

Then, we introduce a new class of admissible triangulations to ensure the projection bijectivity between triangulation and surface except on a small part close to the boundary:

\begin{defn}[{\bf $h$-admissible triangulations for a surface}]\label{admis-triang}
Let $h>0$. We say that a triangulation $\tri$ is $h$-admissible for the surface $X$ if the following properties hold
\begin{enumerate}[label=$(\roman*)$]
\item $\tri$ lies in $\Nc_h(X^{+\eta})$ for some $\eta>0$, where $X^{+\eta}$ is an extension of $X$ defined in \eqref{extension-eta}; \label{hyp.ext}
\item $\tin  = \tri \cap  \Nc_h(X)$ and $X$ are in one-to-one  correspondence through $\pi_X$; \label{hyp.bij}
\item $\Haus(\tout) =O(h)$;\label{hyp.out}
\item $\diam_{\tri}  = O(h)$, where $\diam_{\tri} $  is defined in \eqref{eq.diam}.\label{hyp.diam}
\end{enumerate}
\end{defn}

\begin{figure}[!h]
\centering
\begin{subfigure}[t]{8cm}
\centering
	\includegraphics[width=8cm]{./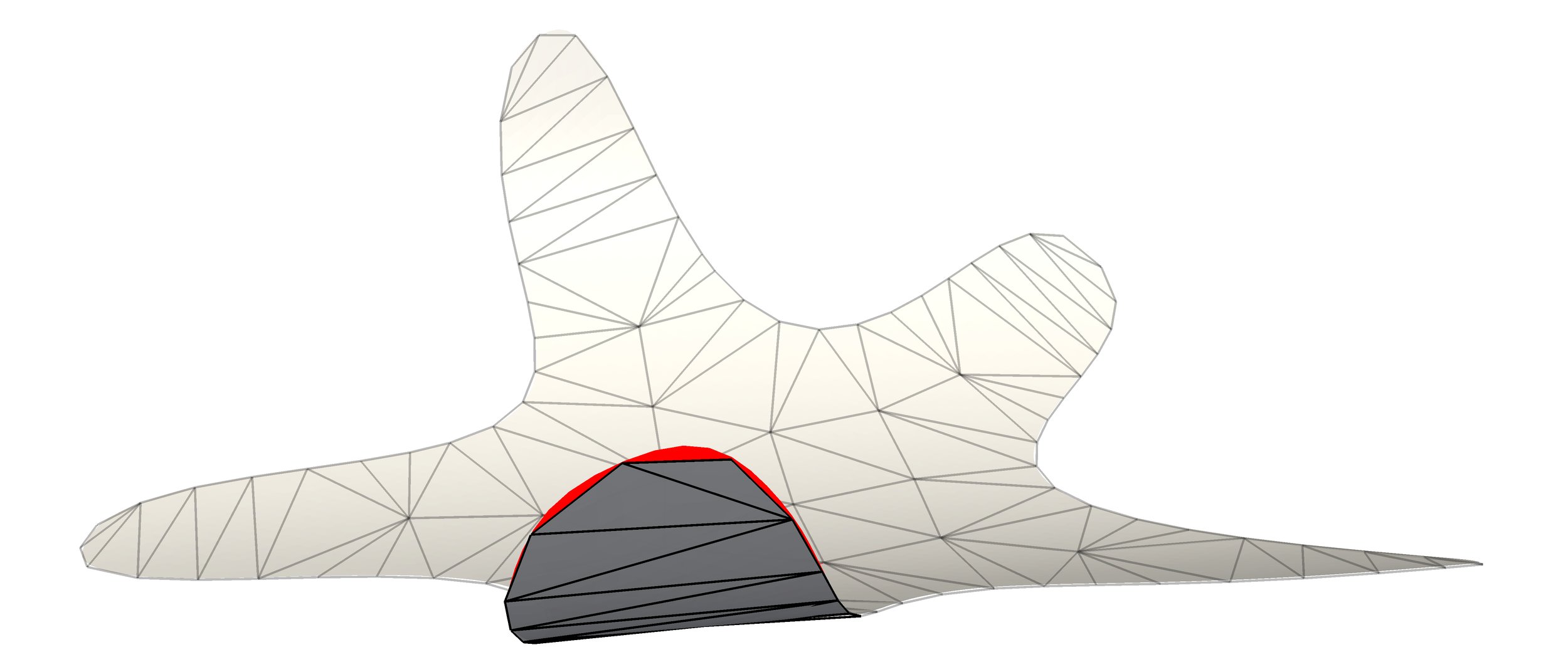}
	\caption{ \small{$X$ can not be entirely projected on $\tri_1$ } } \label{tri_inscribed}
\end{subfigure}
\begin{subfigure}[t]{8cm}
\centering
	\includegraphics[width=8cm]{./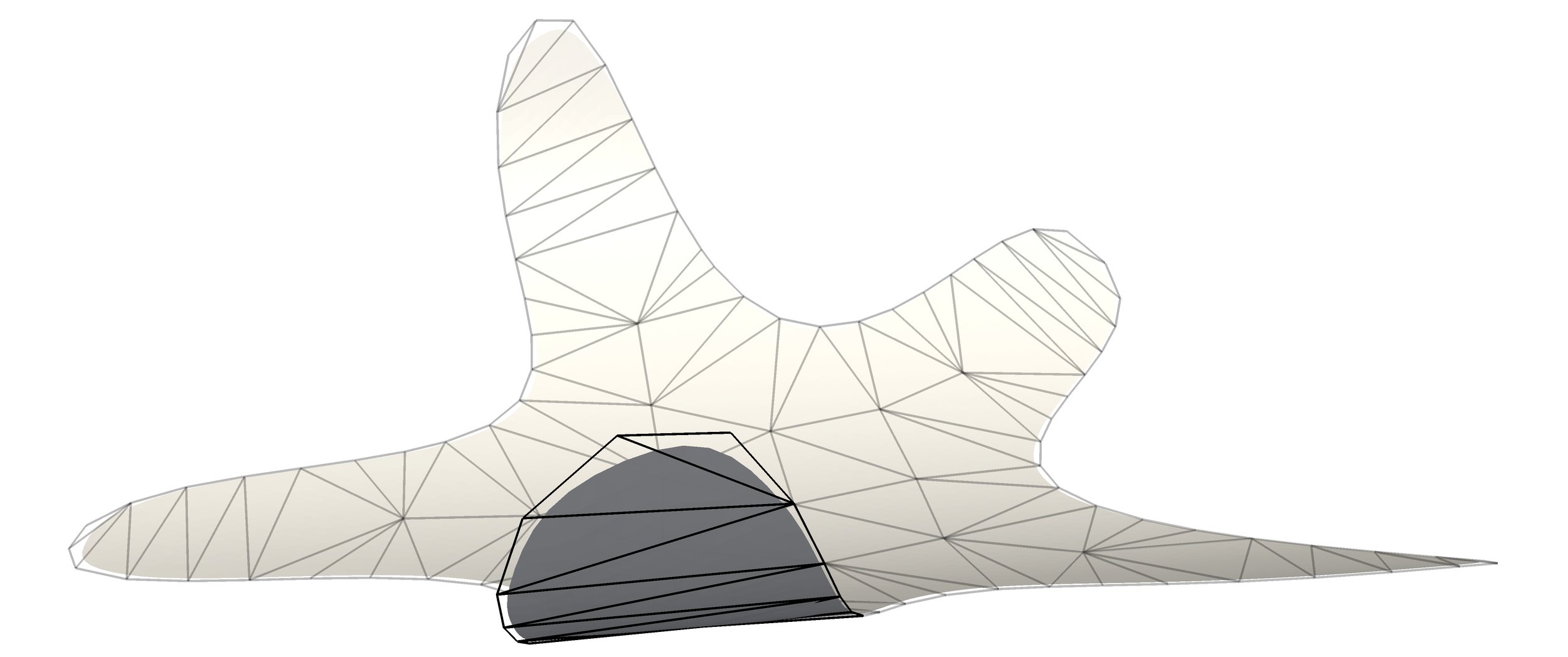}
	\caption{\small{An $h$-admissible triangulation $\tri_2$ for $X$}}\label{tri_hadmissible}

\end{subfigure}
\caption{\small{The surface $X$ is a bent smooth star (solid grey), and two triangulations (black lines) are illustrated. Figure \ref{tri_inscribed}: the triangulation $\tri_1$ is not in one-to-one correspondence with $X$ through the projection map (for instance, the part of the smooth star in red exceeds the triangulation). Figure \ref{tri_hadmissible}: the subset $\tin_2$ of $\tri_2$ is in one-to-one correspondance with $X$}}.
\label{fig-star}
\end{figure}

This definition introduces a discretization framework based on triangulations larger than the corresponding surface but converging to it with respect to the Hausdroff distance (as $h\rightarrow 0$). Condition \ref{hyp.ext} means that $\tri$ is adapted to discretize an extended surface and, because of   \ref{hyp.bij},  $X$ can be completely projected onto the triangulation. However, the part $\tout$, corresponding to the triangles along the boundary,  is assumed to be small via condition \ref{hyp.out}. Finally, condition \ref{hyp.diam} allows us to identify the family parameter with the triangulation diameter in order to describe finer triangulations as $h\rightarrow 0$. Fig. \ref{fig-star} shows an example of admissible and inadmissible triangulations.

We point out that the vertices of triangulations are not, as usual, a set of sampled points on the surface. This approach corresponds better to the data acquisition routines used in image processing.
For instance, biomedical images (OCT, functional MRI) are often modified  (segmentation, deblurring, denoising) to improve their quality for numerical experiments. Then, the data do not correspond to a sampled version of the imaged objects, but instead, they represent an approximation, often noisy, of the natural structures.

\subsection{Convergence of areas}\label{conv-areas}

As pointed out in the previous section, if $\tri$ is an $h$-admissible triangulation of $X$ for some $h>0$ then $ d_\Hh(X,\tri) = O(h)$. However, this does not guarantee a similar error estimate for the respective areas. 

In this section, we introduce some geometric conditions to ensure geometrically consistent triangulations and avoid pathological cases as the already cited  Schwartz lantern (see \cite{Bronstein} Section 3.9).

\begin{defn}[{\bf Angle between normals}]\label{angle} 	Let $h>0$. Assume that $\tri$ is an  $h$-admissible triangulation for $X$.  For every $x\in \tin$ we define the angle $\alpha_x $ as follows
\begin{itemize}
\item  if $x$ belongs to the interior of some triangle, then $\alpha_x$ is the angle belonging to $[0,\pi/2]$ between the two normals ${\bfn}_X(\pi_X(x))$ and ${\bfn}_\tri(x)$;

\item if $x$ belongs to an edge of a triangle, then $\alpha_x$ is the biggest angle belonging to $[0,\pi/2]$ between ${\bfn}_X(\pi_X(x))$ and the normals of the triangles containing $x$.
\end{itemize}
In the following, we set 
\[\alpha_{\max} =  \underset{x\in \tin}{\sup}\, \alpha_x\,.\]
\end{defn}

\begin{lem}\label{lifting}
	Let $h>0$ and $\tri$ be an $h$-admissible triangulation for $X$. We have
\[
	|\Hh^2(X)-\Hh^2(\tin)| = O(\alpha_{\max}^2 + d_\Hh(X,\tri))\,
\]
as $\alpha_{\max}^2, d_\Hh(X,\tri) \rightarrow 0$,
where $\alpha_{\max}$ is introduced in Definition \ref{angle}.
\end{lem}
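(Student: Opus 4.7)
The plan is to exploit condition $(iii)$ of Definition~\ref{admis-triang}: the projection $\pi_X:\tin\to X$ is a bijection and is smooth on the interior of each triangle $T\in\Delta_\tri$. The area formula then yields
\[
\Haus(X)\,=\,\int_{\tin} J\pi_X(x)\, d\Haus(x),
\]
where $J\pi_X$ denotes the two-dimensional Jacobian of $\pi_X$ restricted to the triangle containing $x$ (this is defined $\Haus$-a.e.\ on $\tin$, which is enough for the integral).

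The next step is the pointwise computation of $J\pi_X$. Writing $x=p+t\,\normM(p)$ with $p=\pi_X(x)$ and $t=d_X(x)\,\mathrm{sgn}(\langle x-p,\normM(p)\rangle)$, a standard computation on the normal bundle of a $C^2$ surface (see \cite{MorvanThibert}) gives
\[
J\pi_X(x)\,=\,\cos(\alpha_x)\,(1-t\kappa_1(p))(1-t\kappa_2(p)),
\]
where $\kappa_1,\kappa_2$ are the principal curvatures of $X$ at $p$. Since $X$ is compact and $C^2$, the principal curvatures are uniformly bounded on $X$. By property \ref{hyp.Uh} of Definition~\ref{admis-triang} we have $|t|\le d_\Hh(X,\tri)$ uniformly on $\tin$, so the two curvature factors together contribute $1+O(d_\Hh(X,\tri))$.

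Finally, Taylor-expand $\cos(\alpha_x)=1-\frac{\alpha_x^2}{2}+O(\alpha_x^4)$ and use $\alpha_x\le\alpha_{\max}$ to get $|1-\cos\alpha_x|\le\tfrac12\alpha_{\max}^2$. Combining,
\[
\bigl|\Haus(X)-\Haus(\tin)\bigr|\;\le\;\int_{\tin}\!|1-\cos\alpha_x|\, d\Haus+C\,d_\Hh(X,\tri)\,\Haus(\tin)\;\le\;\bigl(\tfrac12\alpha_{\max}^2+C\,d_\Hh(X,\tri)\bigr)\Haus(\tin).
\]
The factor $\Haus(\tin)$ is uniformly bounded: indeed, the inverse Jacobian $1/J\pi_X$ is uniformly bounded for $h$ small (since $\alpha_{\max}<\pi/2$ and the curvature correction is close to $1$), hence $\Haus(\tin)\le C'\Haus(X)<\infty$. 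The stated estimate follows.

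The main obstacle is the Jacobian formula itself, which requires careful bookkeeping of the differential of the offset map across the two tangent planes (the one of $T$ and the one of $X$ at $p$). However this identity is classical in the study of offsets of smooth submanifolds and is worked out explicitly in \cite{MorvanThibert}, so we invoke it directly; once it is in place, the remainder is Taylor expansion and uniform boundedness arguments.
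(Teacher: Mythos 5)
Your argument is essentially the paper's own proof: a change of variables via $\pi_X$ on $\tin$, the Jacobian formula from \cite{MorvanThibert} combining the $\cos\alpha_x$ factor with the curvature correction, the bounds $\cos\alpha_x=1+O(\alpha_{\max}^2)$ and $d_X(x)=O(d_\Hh(X,\tri))$ from uniform boundedness of the principal curvatures, and integration over $\tin$. The only (harmless) slip is that the curvature factors belong in the denominator, $J\pi_X=\cos\alpha_x/\bigl((1+d_X(x)\epsilon_x\kappa_1)(1+d_X(x)\epsilon_x\kappa_2)\bigr)$, not the numerator as you wrote, but since both expressions are $1+O(d_\Hh(X,\tri))$ the final estimate is unaffected.
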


\begin{proof}
For every $x\in X$ we consider on the tangent space $T_xX$ the basis $\Bb(x)=\{e^1(x), e^2(x)\}$ given by the two principal directions. We denote by  $\kappa_1(x)$ and $\kappa_{2}(x)$  the principal curvatures of $X$ at $x$. Similarly, for every $x\in X$ we can consider the basis $\tilde{\Bb}(x)=\{e^1(x), e^2(x), \normM(x)\}$ for $\R^3$.

Consider the differential $D\pi_X: \R^3\rightarrow T_{\pi_X(x)}X $ of  $\pi_X$. Note that, for every $x\in \mathcal{N}_h(X)$, we have  $D\pi_X(x)(v)=0$ for every variation $v$ in the direction ${\bfn}_{X}(\pi_X(x))$ orthogonal to $ T_{\pi_X(x)} X$. Then, we should consider the tangential variations to calculate the projection's Jacobian. 

In \cite{Boris} it is proved that for any $x\in U_h(X)$ and $v$ parallel to $T_{\pi_X(x)}X$ we have
 \begin{equation*}
 D\pi_X(x)(v) = \left(\rst{I}{T_{\pi_X(x)}X}  - \varepsilon_x d_X(x) D\normM(\pi_X(x))\right )^{-1}v
\end{equation*}
where the matrix of $D\pi_X(x)$ written with respect to the basis $\tilde{\Bb}(\pi_X(x))$ and $\Bb(\pi_X(x))$ is
\begin{equation*}
D \pi_X(x) =
\begin{pmatrix}
\frac{1}{1+d_X(x)\epsilon_x \kappa_1(\pi_X(x))} &0&0\\
 0& \frac{1}{1+d_X(x)\epsilon_x \kappa_2(\pi_X(x))}&0
\end{pmatrix}
\end{equation*}
and $\epsilon_x = \langle \frac{\pi_X(x)-x}{\|\pi_X(x)-x\|},{\bfn}_X(\pi_X(x))\rangle\in \{-1,+1\}$. 

Let now $A \subset \tin$ be a subset of a triangle of $\tri$.  This implies that    the Jacobian of the projection on $X$ restricted to $A$ is given by 
\begin{equation*}
	\det (\rst{D}{A}\pi_X)(x) =\frac{\cos \alpha_x}{(1+d_X(x)\epsilon_x \kappa_1(\pi_X(x)))(1+d_X(x)\epsilon_x \kappa_2(\pi_X(x)))} \quad \forall \,x\in A\,.
\end{equation*}


We have $\cos \alpha_x=1+O(\alpha_{\max}^2)$. Moreover, as the principal curvatures are uniformly bounded and $d_X(x)=O(d_{\Hh} (X,\tri))$,  we get 
\begin{equation}\label{determinant}
	\det(\rst{D}{A}\pi_X)(x) = (1+O(\alpha_{\max}^2))(1 + O(d_\Hh(X,\tri))) = 1+O(\alpha_{\max}^2+ d_\Hh(X,\tri))\,
\end{equation}
as $\alpha_{\max}^2,d_\Hh(X,\tri) \rightarrow 0$. 
We can conclude by the usual change of variables in the area formula. 
\end{proof}

Because of Lemma \ref{lifting}, let us introduce the following set of assumptions:
\begin{hyp}\label{hyp0l2}

Let $\{\tri_h\}_h$ be a sequence of triangulations indexed by a parameter $h\rightarrow 0$  such that 
\begin{enumerate}[label=$(\roman*)$]
\item 
 for any $h>0$, the triangulation $\tri_h$ is $h$-admissible for $X$;\label{cond1}
\item   the sequence $\alpha_{\max}^h = O(h)$ as $h \rightarrow 0$, where $\alpha_{\max}^h$ is the angle defined in  Definition \ref{angle} for $\tri_h$.
\end{enumerate}
\end{hyp}

We point out that, because of  Definition \ref{admis-triang}, we get  $d_\Hh(X,\tri_h) = O(h)$, which implies that, under the assumption of Hypothesis \ref{hyp0l2}, the area error computed in Lemma \ref{lifting} is also $O(h)$. Then, Hypothesis \ref{hyp0l2} implies the geometric consistency of the approximation and helps avoid pathological cases such as the Schwartz lantern.

The following result holds,  generalizing Corollary 5 of \cite{MorvanThibert} to surfaces with smooth boundaries : 

\begin{prop}[{\bf Convergence of the area}]\label{conv-area}
	Let $X$ be a surface satisfying Hypothesis \ref{cond_surf}  and $\{\tri_h\}_h$ a sequence of triangulation satisfying Hypothesis \ref{hyp0l2}. Then we have  $\lim_{h\to 0} \limits\Haus(\tri_h) = \Haus(X)$.
\end{prop}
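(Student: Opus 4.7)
The plan is to reduce everything to Lemma \ref{lifting} by exploiting the decomposition $\tri_h = \tin_h \cup \tout_h$ built into Definition \ref{admis-triang}. Since $\tin_h$ and $\tout_h$ overlap at most on $\tri_h \cap \partial \Nc_h(X)$, a set which is the intersection of a $2$-rectifiable set with a $C^1$ hypersurface meeting it transversally and so is $\Haus$-negligible, I would start from the additive identity
$$\Haus(\tri_h) \;=\; \Haus(\tin_h) \,+\, \Haus(\tout_h)$$
and treat the two summands separately.

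For the first piece, I would apply Lemma \ref{lifting} directly to the $h$-admissible triangulation $\tri_h$, obtaining
$$|\Haus(X) - \Haus(\tin_h)| \;=\; O\bigl((\alpha_{\max}^h)^2 + d_{\mathcal H}(X,\tri_h)\bigr).$$
Condition \ref{hyp.Uh} of Definition \ref{admis-triang} places $\tri_h$ inside $U_h(X)$ and hence forces $d_{\mathcal H}(X,\tri_h) < h$, while Hypothesis \ref{hyp0l2}(ii) gives $(\alpha_{\max}^h)^2 = O(h^2)$. Both quantities vanish as $h \to 0$, so $\Haus(\tin_h) \to \Haus(X)$.

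For the second piece, condition \ref{hyp.iv} of Definition \ref{admis-triang} is precisely the bound $\Haus(\tout_h) = O(h)$, which also goes to zero. Adding the two estimates proves $\Haus(\tri_h) \to \Haus(X)$.

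There is no real obstacle: the substantive geometric work, namely controlling the Jacobian of $\pi_X$ restricted to a triangle via the principal curvatures and the normal angle $\alpha_x$, has already been carried out in Lemma \ref{lifting}. The present proposition is essentially a bookkeeping step that converts the qualitative admissibility properties and the quantitative assumption $\alpha_{\max}^h = O(h)$ into the desired limit. The only point worth a careful word is the justification of additivity of $\Haus$ across $\tin_h$ and $\tout_h$, which follows from the fact that their common boundary lies in $\partial \Nc_h(X)$ and is therefore a one-dimensional, hence $\Haus$-null, subset of the piecewise linear manifold $\tri_h$.
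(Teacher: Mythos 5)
Your proposal is correct and follows exactly the route the paper takes: the paper's proof is the one-line remark that the result ``follows from Lemma \ref{lifting} and \ref{hyp.iv} of Definition \ref{admis-triang}'', which is precisely your decomposition into $\tin_h$ (controlled by Lemma \ref{lifting} together with $d_{\mathcal H}(X,\tri_h)<h$ and $\alpha_{\max}^h=O(h)$) and $\tout_h$ (controlled by condition \ref{hyp.iv}). Your additional remark on the $\Haus$-negligibility of the overlap is a reasonable piece of extra care that the paper leaves implicit.
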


\begin{proof} The proof follows from Lemma \ref{lifting} and  Definition \ref{admis-triang} \ref{hyp.out}.
\end{proof}

\subsection{From the triangulation to the surface}\label{signal-admi}

This section defines how to carry a signal from the triangulation to the surface.

\begin{defn}[{\bf Projection}] \label{lifting-def}
For every function $f$ defined on an $h$-admissible triangulation $\tri$ for $X$  ($h> 0$) we define the projection of $f$ onto $X$ by
\begin{equation*}f^\ell:X\rightarrow \R\,,\quad\quad
f^\ell(x)
 = f(\pi_X^{-1}(x)) \quad \quad x\in X\,.
\end{equation*}
We point out that the function $f^\ell$ carries on $X$ the signal defined on $\tin$.
\end{defn}

\begin{prop}\label{L1norms}
	Let  $h>0$ and $\tri$ be  and an admissible $h$-triangulation for  $X$.
Then,  for every $f\in W^{1, \infty}(\tri, \R)$, we have
\begin{align*}
\|f^\ell\|_{L^p(X)} &= \|f\|_{L^p(\tin)}+ O(\alpha_{\max}^2 + d_\Hh(X,\tri)),\\
\|\nabla_X f^\ell\|_{L^p(X)} &=  \|\nabla_{\tin} f\|_{L^p(\tin)} + O(\alpha_{\max}^2 + d_\Hh(X,\tri)).
\end{align*}
for every $p\in [1,\infty]$, as $\alpha_{\max}^2, d_{\Hh}(X,\tri)\rightarrow 0$.
\end{prop}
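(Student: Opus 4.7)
The key idea is to pull both integrals back to $\tin$ via the bijective normal projection $\pi_X:\tin\to X$ (guaranteed by items \ref{hyp.Uh} and $(iii)$ of Definition \ref{admis-triang}), so that the error reduces to estimating the Jacobian of $\pi_X$ and the norm of its tangential differential. Lemma \ref{lifting} already provides the Jacobian control
\[
\det(D\pi_X(y)|_{T_y\tin}) = 1 + O(\alpha_{\max}^2 + d_{\mathcal H}(X,\tri))
\]
uniformly in $y\in\tin$. The $L^p$ identity is then immediate: performing the change of variables $x = \pi_X(y)$ gives
\[
\|f^\ell\|_{L^p(X)}^p = \int_{\tin} |f(y)|^p \det(D\pi_X(y)|_{T_y\tin})\, d\Haus(y) = \|f\|_{L^p(\tin)}^p (1 + O(\alpha_{\max}^2 + d_{\mathcal H}(X,\tri))),
\]
and taking the $p$-th root, together with the $W^{1,\infty}$ bound on $f$ and Proposition \ref{conv-area} (which controls $\Haus(\tin)$), yields the first line.

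For the gradient identity, I would first derive the pointwise relation between $\nabla^X f^\ell$ and $\nabla^{\tin} f$. Since $f = f^\ell\circ\pi_X$ on $\tin$, differentiating tangentially at $y\in\tin$ along $v\in T_y\tin$ gives
\[
\langle \nabla^{\tin} f(y), v\rangle = \langle \nabla^X f^\ell(\pi_X(y)), M_y v\rangle, \quad M_y := D\pi_X(y)|_{T_y\tin}\colon T_y\tin\to T_{\pi_X(y)}X,
\]
so that $\nabla^{\tin} f(y) = M_y^{\,T}\nabla^X f^\ell(\pi_X(y))$.

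The heart of the matter is to show that $M_y$ is close to an isometry, more precisely that both $\|M_y\|$ and $\|M_y^{-1}\|$ equal $1 + O(\alpha_{\max}^2 + d_{\mathcal H}(X,\tri))$. Using the factorization $M_y = L_y\, P_y$ from the proof of Lemma \ref{lifting}, where $L_y = (I - \epsilon_y d_X(y) D\normM(\pi_X(y)))^{-1}$ acts on $T_{\pi_X(y)}X$ and $P_y$ denotes the orthogonal projection from $T_y\tin$ to $T_{\pi_X(y)}X$, I would argue as follows. The operator $L_y$ is self-adjoint on $T_{\pi_X(y)}X$ with eigenvalues $1/(1+\epsilon_y d_X(y)\kappa_i) = 1 + O(d_{\mathcal H}(X,\tri))$, since the principal curvatures of the compact surface $X$ are bounded. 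For $P_y$, a standard computation between two 2-planes in $\R^3$ whose unit normals make angle $\alpha_y\leq\alpha_{\max}$ shows that its singular values are exactly $1$ and $\cos\alpha_y = 1 + O(\alpha_{\max}^2)$. Composing, the singular values of $M_y$ lie in $1 + O(\alpha_{\max}^2 + d_{\mathcal H}(X,\tri))$, and the same holds for $M_y^{-1}$.

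Combining the norm estimate $|\nabla^X f^\ell(\pi_X(y))| = |\nabla^{\tin} f(y)|(1 + O(\alpha_{\max}^2 + d_{\mathcal H}(X,\tri)))$ with the Jacobian identity and the change of variables exactly as above yields the second line after taking the $p$-th root. The main obstacle, as flagged, is the singular-value estimate on $M_y$: the Jacobian bound in Lemma \ref{lifting} controls only $\det M_y$, and one must additionally rule out a near-degenerate direction, which is precisely what the factorization through the projection $P_y$ and the bound $\alpha_y\leq \alpha_{\max}$ achieves. The endpoint $p=\infty$ follows by an essentially identical argument, replacing integrals by essential suprema.
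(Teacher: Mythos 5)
Your proof is correct and follows essentially the same route as the paper's: the change of variables $y=\pi_X(x)$ together with the Jacobian estimate of Lemma \ref{lifting} for the $L^p$ identity, and the chain rule for the gradient identity. The paper's own proof is only a two-line sketch that defers the rigorous details to \cite{D88} Lemma 3; your singular-value analysis of $D\pi_X$ restricted to $T_y\tin$ (factoring through the tangent-plane projection to upgrade the determinant bound to an operator-norm bound) supplies exactly the content that the paper delegates to that reference.
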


\begin{proof} 
The first equality is proved by performing the change of variables $y=\pi_X(x)$ and using Lemma \ref{lifting}. The second relationship is proved by the same arguments and applying the chain rule.
\end{proof}

We point out that, for triangulations verifying Hypothesis \ref{hyp0l2},  the previous proposition generalizes to surfaces with boundary the Sobolev error estimates for Euclidean finite elements.


\section{Discretization}\label{Discretization}

This section aims to write down the discretized matching energy to compare the discrete problem defined on triangulations to the continuous one defined on the original surfaces.

For clarity, we detail the definition of all discrete norms and functionals in the framework of finite elements. Although it may seem redundant to a reader accustomed to this kind of formalization, we prefer to define the several operators to prevent misinterpretation of notations.

\subsection{Notations}

Let $\tri$ be a triangulation in the sense of Definition \ref{def.tri}.  We denote by $N_v$, $N_e$, and $N_t$ the number of vertices, edges, and triangles of $\tri$, respectively. The family  $\{v_i\}_{i=1,\dots,N_v}$ denotes the vertices of the triangulation.

\begin{figure}[h!]
	\centering
	\begin{tikzpicture}[scale=.8]
	
		\coordinate (X2) at (0,0,0);
		\coordinate (X1) at (5,0,4);
		\coordinate (X3) at (0,0,5);
	
		\coordinate (X12) at (2.5,0,2);
		\coordinate (X13) at (2.5,0,4.5);
		\coordinate (X23) at (0,0,2.5);

		\draw[dashed] (X1) node[right] {$v^k_1$} -- (X23) node[left] {$v^k_{23}$} ;
		\draw[dashed] (X2) node[above] {$v^k_2$} -- (X13) node[below] {$v^k_{13}$} ;
		\draw[dashed] (X3)  node[left] {$v^k_3$}-- (X12) node[above right] {$v^k_{12}$} ;

		\draw[thick,fill=blue,fill opacity=.2] (X1) -- (X2) -- (X3)  -- cycle;

		\node[above right] at (5/3,0,3) {$v_0^k$};

\end{tikzpicture}
	\caption{\small{Labels of various points in the triangle $T_k$.}}
	\label{fig.notations_triangles}
\end{figure}

\noindent For every $k=1,\dots,N_t$, we denote by $\{v_i^k\}_{i=1,2,3}$ the vertices of the triangle $T_k\subset \tri$,  $\left\{ v_{ij}^k \right\}_{1\leq i<j\leq 3}$ the center of the edge linking $v_i^k$ to $v_j^k$,  and $v_0^k = \tfrac 1 3 \sum_{i=1}^3v_i^k$ the center of mass of the triangle (see Figure \ref{fig.notations_triangles}). Analogously we denote by $\{f_i^k\}_{0\leq i\leq 3}$ the values of the function $f$ at location $\{v_i^k\}_{0\leq i\leq 3}$ of $T_k$.

\subsection{$P_0$ and $P_1$ triangular finite elements}

Let us start with the following definition,
\begin{defn}
	For a given triangulation $\tri$, we denote by $\PP_{0}(\tri)$ (resp. $\PP_{1}(\tri)$), the set of functions that are constant on the interior of each triangle and null on their edges (resp. the set of the continuous functions that are affine on each triangle). 
\end{defn}

The elements of  $\PP_{0}(\tri)$ (resp. $\PP_{1}(\tri)$)  are completely described by their values $\{f_0^k\}_k$ at the center of mass $\{v_0^k\}_k$ (resp. their values $\{\{f_i^k\}_{1\leq i\leq 3}\}_{k}$ at vertices $\{\{v_i^k\}_{1\leq i\leq 3}\}_{k}$) of the triangulation. Note that for each triangle $T_k\in\tri$
\begin{equation*}
\forall \,f\in \PP_{1}(\tri)\,, \quad  \,f_0^k=f(v_0^k)=f(\sum_{i=1}^3v_i^k/3)=\sum_{i=1}^3 f(v_i^k)/3=\sum_{i=1}^3 f_i^k/3\,.
\end{equation*}
On the other hand, if $f\in \PP_{0}(\tri)$, as $f$ is null on the edges of each triangle, we can not compute $f_0^k$ by the values at the vertices, and we can only set
\[\forall \,f\in \PP_{0}(\tri)\,, \quad f_0^k = f(v_0^k)\,.\]
We denote by $p_0$ the $L^2$ projection of $\PP_{1}(\tri)$ on $\PP_{0}(\tri)$ where for $f\in \PP_{1}(\tri)$, the function $p_0(f)$ is the unique element of $\PP_{0}(\tri)$ such that
\begin{equation}
 \label{eq:proj}
  p_0(f)(v_0^k)=f(v_0^k)\,.
\end{equation}
In other words, the operator $p_0$ replaces the affine approximation of a signal on each triangle with a constant approximation using the value at the center of mass.

A basis for $\PP_{1}(\tri)$  (barycentric basis) is given by the family  $\{\varphi\}_{i=1,\cdots,N_v}$ with $\varphi_i\in \PP_{1}(\tri)$ and $\varphi_i(v_j)=\delta_{ij}$ (Kronecker's delta), for every $i,j=1,\cdots,N_v$.  Then, every $f\in \PP_{1}(\tri)$ can be written as 
\begin{equation*}\label{discrete-fct}
\forall x\in \tri \quad\quad f(x) = \sum_{j=1}^{N_v} f_j\varphi_j(x)\,,\quad f_j= f(v_j)\,.
\end{equation*}
Remark that there exists a bijection between $\PP_{1}(\tri)$ and $\R^{N_v}$, defined by the following operator
\begin{equation}\label{bijection}	
	P_1\; :(f_1,\dots,f_{N_v})\in \R^{N_v}	\;\;\mapsto\;\; \; f = \sum_{j=1}^{N_v}f_j \, \varphi_j\in \PP_{1}(\tri)\,.
\end{equation}

\subsection{Discrete operators for finite elements} 

For every $k = 1,\cdots,N_t$ the area of the triangle $T_k$ is denoted by $\lvert T_k\rvert$ and is equal to $\frac{1}{2}\lVert \bfn_{T_k}\rVert$ where ${\bfn}_{T_k}\,=\,(v_2^k-v_1^k)\wedge (v_3^k-v_1^k)$.  

\subsubsection{Discrete functional norms}

Depending on how the continuous signal is discretized, various methods may be used to compute the norm of the discrete signal.

\paragraph{$L^p$ norm of $P_0$ finite elements.}
Let $p \geq  1$ and  $f:\tri \to \R$ a function in $L^p(\tri,\R)$.   
The $p$th power of the discrete $L^p$ norm of $f$ is simply defined as
\begin{equation}\label{lp-p0}
	L_{0}^p[f,\tri] = \sum_{k=1}^{N_t} |T_k| |f_0^k|^p.
\end{equation}
The formula \eqref{lp-p0} is exact for signals that are (almost everywhere) constant on each triangle so that we have $L_{0}^p[f,\tri] = \|f\|_{L^p(\tri)}^p$ for any $f\in\PP_{0}(\tri)$.
\paragraph{$L^p$ norm of $P_1$ finite elements.}
As in this work, we are concerned by $L^p$ norms with $p=1,2$, we need to define the discrete operators via an exact formula on piecewise quadratic polynomials. We use the following approximation by the evaluation at the midpoints (see \cite{Allaire} p. 178--179):
\begin{equation}\label{eq.NC}
	L^p_1[f,\tri] =  \frac 1 3 \sum_{k=1}^{N_t} |T_k| (\abs{f^k_{12}}^p + \abs{f^k_{13} }^p + \abs{f^k_{23} }^p  ).
\end{equation}
where for a piecewise linear signal $f\in\PP_1(\tri)$ we have $f^k_{ij} = \frac 1 2 (f^k_{i} + f^k_{j})$ for any $k=1,\dots,N_t$ and $1\leq i<j\leq 3$. 

The formula \eqref{eq.NC} is exact if $p=2$ (\ie{}, $L^2_1[f,\tri] = \|f\|_{L^2(\tri)}^2$) since the function $\rst{f^2}{T_k}$ is a polynomial of degree 2 for any $k=1,\dots,N_t$.
The formula \eqref{eq.NC} is also exact if $p=1$ and $f$ has constant sign on each triangle (\ie{} we have  $L^1_1[f,\tri] = \|f\|_{L^1(\tri)}$ if $f \geq 0$ or $f \leq 0$). 
Suppose the signal $f\in\PP_1(\tri)$ has a changing sign on a given triangle $T_k$. In that case, the function to integrate is not a polynomial on the entire triangle, and the formula is not exact anymore. Then, the triangle is decomposed into several sub-triangles with constant sign signal and the operator in \eqref{eq.NC} is computed as the sum of the respective operators computed (exactly) on each subtriangle. This is equivalent to performing a local triangulation refinement to ensure the signal has a constant sign on each triangle.

We suppose now that $f\in \PP_{1}(\tri)$ and we define the discrete operators corresponding to the $H^1$ and $BV$ norms. For every $T_k\in \tri$ and for every $f\in \PP_{1}(\tri)$, the gradient of $f$ on $T_k$ can be computed by
\begin{equation}\label{eq.graddis}
	[\nabla_{\tri} f]_{T_k}= \frac{ e_2^k \wedge e_3^k }{ \| e_2^k \wedge e_3^k \|^2}\wedge \left(f_1^k e_1^k+f_2^k e_2^k+f_3^k e_3^k\right )
\end{equation}
where 
\[e_1^k=v_3^k-v_2^k\,,\quad e_2^k=v_1^k-v_3^k\,,\quad e_3^k=v_2^k-v_1^k\,. \]
The previous relationship can be stated by writing $f$ in the barycentric coordinates system and recalling that the gradients of the basis elements are perpendicular to the triangle edges.

In this framework, the gradient $\nabla_{\tri} f $ is constant on each triangle and, by convention, is null on the edges.
We can now define the discrete operators of the gradient norms that are exact for every $f \in \mathbb P_{1}$.
\paragraph{Total variation.} 
The total variation of $f \in \mathbb P_{1}$ on $\tri$ is given by 
\begin{equation}\label{discrete-var}
V[f,\tri]= \sum_{k=1}^{N_t} |T_k|\big\|[\nabla_{\tri} f]_{T_k} \big\|\,.
\end{equation}

\paragraph{$H^1$ norm.} For $f\in \PP_{1}(\tri)$,  the square of the  $L^2$ norm of the gradient is given by 
\begin{equation}\label{h1}
H[f,\tri]= \sum_{k=1}^{N_t} |T_k|\big\|[\nabla_{\tri} f]_{T_k} \big\|^2\,\,.
\end{equation}
Remark that $H[f,\tri] = L^2_0[\|\nabla_{\tri} f\|,\tri ]$  as $f\in \mathbb{P}_1(\tri)$ and by formula \eqref{eq.graddis} we have  $\|\nabla_{\tri} f \| \in \PP_0(\tri)$.

\def\mudis{\boldsymbol{\mu}}

\subsubsection{Discrete fvarifold norm} 

The definition of discrete fvarifolds can be posed by considering a fvarifold associated to 
 $(\tri,f)$. We point out  $(\tri,f)$ does not verify the conditions (smoothness) defining fshapes;  however, the fshape definition given in Section \ref{fvarifold} remains consistent with the properties of $\tri$. 
 
Then, with an abuse of language, we call $(\tri,f)$ discrete fshape, and the related fvarifold supported on it is defined as follows :

\begin{equation}\label{rectif-tri}
\var{\tri}{f} = \Haus \res \tri \otimes \delta_{T_\tri(x)} \otimes  \delta_{f(x)}\,.
\end{equation}

Moreover, we approximate such a fvarifold by a discrete operator $\vardis{\tri}{f}$ to simplify the computation. Such an approximation is set in the same way in both $P_0$ and $P_1$ framework :
\begin{equation}
\vardis{\tri}{f} = \sum_{k=1}^{N_t} |T_k| \delta_{(v_0^k, V_k, f_0^k)}\label{eq:approx_fvar}
\end{equation}
where 
\[ v_0^k= \frac{1}{3}(v_1^k+v_2^k+v_3^k),\  f_0^k=f(v_0^k), \  V_k =  \text{Span}\{v_2^k-v_1^k,v_3^k-v_1^k\}\,.\]
The discrete fvarifold norm  is defined by 
\begin{equation}\label{discr-var}
\Var[\var{\tri}{f}] =  \normvar{\vardis{\tri}{f}} \,.
\end{equation}
We point out that by the formula \eqref{eq:proj}, we have
\[\vardis{\tri}{f}=\vardis{\tri}{p_0(f)}\quad \quad \forall\, f\in \PP_{1}(\tri) \,.\]

We end this section with a technical lemma that will be useful in the following section. It proves an error estimate, with respect to the fvarifold norm, between a discrete signal $f$ (belonging to $P_0$ or $ P_1$) and its projection $f^l$ on the surface (see Definition  \ref{lifting-def}).

\begin{lem}\label{approx-var} Let $X$ be a surface verifying Hypothesis \ref{cond_surf} and $\tri$ a $h$-admissible triangulation for $X$ verifying Hypothesis \ref{hyp0l2}. Then we have 
\begin{equation}\label{bound-var-P0}
\sup_{f\in \PP_{0}(\tri)}\normvar{\vardis{\tri}{f}-\var{X}{f^\ell}}=O(h)\,,
\end{equation}
and
\begin{equation}\label{bound-var-P1}\forall\, f\in \PP_{1}(\tri)\quad \normvar{\vardis{\tri}{f}-\var{X}{f^\ell}}=O(h)(1+\|\nabla f\|_{L^1(\tri)})\,.
\end{equation}

\end{lem}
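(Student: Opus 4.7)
} The plan is to use the dual embedding \eqref{dual-emb}: it suffices to produce, uniformly over test functions $\varphi\in\test$ with $\|\varphi\|_{C^1_0}\le 1$, the bound
\[
\bigl|\vardis{\tri}{f}(\varphi)-\var{X}{f^\ell}(\varphi)\bigr|=O(h)
\]
in the $\PP_0$ case and $O(h)(1+\|\nabla f\|_{L^1(\tri)})$ in the $\PP_1$ case. Throughout, I will treat each term in the sum $\vardis{\tri}{f}=\sum_k|T_k|\delta_{(v_0^k,V_k,f_0^k)}$ against the integral $\var{X}{f^\ell}(\varphi)=\int_X\varphi(y,T_yX,f^\ell(y))d\Haus(y)$ by the standard Lipschitz trick for varifold norms.

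First I would split $\tri=\tin\cup\tout$ and likewise decompose each triangle as $T_k=(T_k\cap\tin)\cup(T_k\cap\tout)$. The contribution of the $\tout$ part to $\vardis{\tri}{f}(\varphi)$ is bounded by $\|\varphi\|_\infty\,\Haus(\tout)=O(h)$ thanks to Definition \ref{admis-triang} \ref{hyp.iv}; this part does not appear in $\var{X}{f^\ell}(\varphi)$ since $f^\ell$ lives on $X=\pi_X(\tin)$. So the real work is to compare $\sum_k|T_k\cap\tin|\varphi(v_0^k,V_k,f_0^k)$ with the surface integral. For the latter, the change of variables $y=\pi_X(x)$ on $\tin$ yields
\[
\var{X}{f^\ell}(\varphi)=\int_{\tin}\varphi(\pi_X(x),T_{\pi_X(x)}X,f(x))\,\det(D\pi_X(x))\,d\Haus(x),
\]
using $f^\ell\circ\pi_X=f$ on $\tin$. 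By \eqref{determinant} from the proof of Lemma \ref{lifting} together with Hypothesis \ref{hyp0l2}, the Jacobian equals $1+O(h)$, so replacing it by $1$ costs $O(h)\|\varphi\|_\infty\Haus(X)=O(h)$.

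It then remains, for each triangle $T_k$, to estimate
\[
\int_{T_k\cap\tin}\bigl|\varphi(v_0^k,V_k,f_0^k)-\varphi(\pi_X(x),T_{\pi_X(x)}X,f(x))\bigr|\,d\Haus(x).
\]
I would bound the integrand via the $C^1$ regularity of $\varphi$, controlling three contributions: (i) $|v_0^k-\pi_X(x)|\le|v_0^k-x|+|x-\pi_X(x)|\le\diam T_k+h=O(h)$; (ii) the Grassmannian distance between $V_k$ and $T_{\pi_X(x)}X$, which by Definition \ref{angle} is $\sqrt{2}\sin\alpha_x\le\sqrt{2}\alpha_{\max}=O(h)$ under Hypothesis \ref{hyp0l2}; (iii) the signal discrepancy $|f_0^k-f(x)|$, which vanishes a.e.\ on $T_k$ when $f\in\PP_0(\tri)$ and, when $f\in\PP_1(\tri)$, is bounded by $\|[\nabla_{\tri}f]_{T_k}\|\cdot\diam T_k=O(h)\|[\nabla_{\tri}f]_{T_k}\|$ by the affineness of $f$ on $T_k$ and the fact that $f_0^k=f(v_0^k)$.

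Summing over $k$ yields $O(h)\Haus(X)=O(h)$ in the $\PP_0$ case, and in the $\PP_1$ case produces the additional term
\[
h\sum_k|T_k|\,\|[\nabla_{\tri}f]_{T_k}\|=h\,\|\nabla_{\tri}f\|_{L^1(\tri)},
\]
giving the advertised bound $O(h)(1+\|\nabla f\|_{L^1(\tri)})$. The main subtle point I expect is (ii), where one must verify that the Grassmannian kernel distance is genuinely controlled by the normal-angle $\alpha_{\max}$ on both the interior of each triangle and the shared edges (handled by the second clause of Definition \ref{angle}); all other estimates are routine consequences of the $h$-admissibility, $\alpha_{\max}=O(h)$, and the boundedness of the principal curvatures of $X$ already exploited in Lemma \ref{lifting}.
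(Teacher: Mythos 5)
Your proposal is correct and follows essentially the same route as the paper: reduce to test functions via \eqref{dual-emb}, discard $\tout$ using \ref{hyp.iv} of Definition \ref{admis-triang}, change variables through $\pi_X$ with Jacobian $1+O(h)$ from \eqref{determinant}, and use the $C^1$ bound on $\varphi$ together with $\diam_\tri=O(h)$, $\alpha_{\max}=O(h)$, and the affine gradient bound to control the position, Grassmannian, and signal discrepancies. The only (cosmetic) difference is that the paper factors the estimate through the intermediate continuous varifold $\var{\tri}{f}$, whereas you compare $\vardis{\tri}{f}$ to $\var{X}{f^\ell}$ in a single step — and your explicit treatment of the Grassmannian term via $d(V_k,T_{\pi_X(x)}X)\le\sqrt{2}\,\alpha_{\max}$ is in fact slightly more careful than the paper's.
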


\begin{proof}
	Let $f\in \PP_{0}(\tri)$. By the change of variables $x=\pi_X(y)$, because of Hypothesis \ref{hyp0l2} and  formula \eqref{determinant}, for every  $f\in\PP_{0}(\tri)$ and $\varphi \in \test$, we get
\[
	\bigg |\int_X \varphi(x,T_x X,f^\ell(x)) d \Haus(x) -\int_{\tin} \varphi(y,T_y \tin, f(y)) d \Haus(y)\bigg |\leq \|\varphi\|_{L^{\infty }(\R^3\times G(3,2)\times \R)}\, O(h) \,. 
\]
Moreover, because of Definition \ref{admis-triang}\ref{hyp.out} and Lemma \ref{null-var}, $\var{\tout}{f}$ converges towards the null fvarifold as $h\to 0$.
Then
\[
\bigg|\int \varphi d(\var{\tri}{f}-\var{X}{f^\ell})\bigg|\leq \|\varphi\|_{L^\infty(\R^3\times G(3,2)\times \R)}O(h)\,.
\]

Now, if $f\in \PP_{0}(\tri)$, then  $f=f_0^{k}$ on the interior of every triangle $T_k$. Then, for every function  $\varphi\in \test$, we get
\[
\begin{array}{ll}
|(\var{\tri}{f}-\vardis{\tri}{f})(\varphi)| &\leq  \displaystyle{\sum_{k=1}^{N_t} \int_{T_k}| \varphi(x,T_x\tri,f(x))-\varphi(v_0^{k}, V_k,f_0^{k}) |\,d\Haus(x)}\\
&\displaystyle{\leq \sum_{k=1}^{N_t} \|\varphi\|_{C^1(\R^3\times G(3,2)\times \R)}\int_{T_k}\|(x,V_k,f(x))-(v_0^{k}, V_k, f_0^{k})\|_{\R^3\times G(3,2)\times \R}}\,d\Haus(x)\\
&\displaystyle{\leq  \sum_{k=1}^{N_t} |T_k|\|\varphi\|_{C^1(\R^3\times G(3,2)\times \R)}\mbox{diam}(T_k)\leq \Haus(\tri)\|\varphi\|_{C^1(\R^3\times G(3,2)\times \R)}O(h)\,.}
\end{array}
\]
where $N_t$ denotes the number of triangles contained in  $\tri$ and $V_k$ the tangent space to $T_k$. So, by \eqref{dual-emb}, we obtain
\[
	\normvar{\var{\tri}{f}-\vardis{\tri}{f}}
	\leq O(h) \Haus(\tri)\,,
\]
and, by the triangle inequality, we get $\normvar{\mu_{(X,f^\ell)}-\vardis{\tri}{f}}\leq O(h)$ which proves \eqref{bound-var-P0}.

If $f\in \PP_{1}(\tri)$ the proof is similar. The bound for $|(\var{\tri}{f}-\vardis{\tri}{f})(\varphi)|$ depends on $f$ because 
\[
\forall\, x\in T_k\quad |f(x)-f_0^{k}|\leq \|\nabla f\|_{L^1(T_k)}\mbox{diam}(T_k)
\]
and, by the same arguments, we get
$$
\normvar{\vardis{\tri}{f}-\var{X}{f^\ell}}\leq  O(h)(1+\|\nabla f\|_{L^1(\tri)})\,
$$
which proves \eqref{bound-var-P1}
 
\end{proof}

\subsection{Discretization of continuous signals}\label{part.disc}

Finally, we present the discretization process (on admissible triangulations) of a signal defined on a surface. 
Let $X$ be a surface verifying Hypothesis \ref{cond_surf} and $\tri$ be a $h$-admissible triangulation for $X$ (for some $h>0$), and $f\in BV(X)$ (resp.  $H^1(X)$, $L^2(X)$). 

According to  Definition \ref{admis-triang}\ref{hyp.ext}, by using the map $\mbox{Ext}$ defined in \eqref{ext}, we can extend the manifold $X$ to a larger suitable manifold $\xeta$ such that $\tri \subset \Nc_h(\xeta)$. Moreover, because of Theorem \ref{approx2}, the signal $f$ can be extended to a signal $\tilde f$ defined on $\xeta$ which is $W^{1,\infty}(\xeta)$ and with a small norm on $\xeta\setminus X^{-\eta}$. 

Let $T$ be a triangle contained in $\tri$, we set the following discretization process: 
\begin{itemize}
\item  {\bf $P_0$ elements}: as every $P_0$ element is null on each edge of $T$, we define the piecewise (a.e.) constant $f_h$ corresponding to $f$ on  $T$ as 
\[
f_h=
\begin{cases}
0 & \mbox{on the edges of $T$}\,,\\
\displaystyle{\tfrac{1}{3} \left( \tilde f(\pi_{\xeta}(v_1))+\tilde f(\pi_{\xeta}(v_2))+\tilde f(\pi_{\xeta}(v_3))\right)} &  \mbox{otherwise,}\\
\end{cases}
\] 
where $v_1,v_2,v_3$ denote the three vertices of $T$.

\item {\bf $P_1$ elements:} the piecewise linear $f_h$ corresponding to $f$ on $T$ is defined as 
\[
f_h=P_1\big(\tilde f(\pi_{\xeta}(v_1)),\tilde f(\pi_{\xeta}(v_2)),\tilde f(\pi_{\xeta}(v_3))\big)
\] 
where $v_1,v_2,v_3$ denote the three vertices of $T$ and $P_1$ is defined by \eqref{bijection}. 
\end{itemize}

This process defines a discrete signal $f_h$ on the triangulation whose different norms can be computed by the operators introduced above.
We note that these operators represent an approximation of their continuous counterparts, which motivates the study of the relationship between the discrete and continuous minimizers discussed in the next section.

\section{Discrete problem and $\Gamma$-convergence results}\label{GammaConvSection}

This section aims to show the $\Gamma$-convergence of the discretized problems to the continuous one when the triangulation is fine enough. 
The main consequence is that the discrete optimal solution computed on triangulations represents a good approximation of the optimum computed on the surface (Theorem \ref{minima}). The proof is provided under the geometric assumptions established in Hypothesis \ref{hyp0l2}. This implies, in particular, that the area error (see Lemma \ref{lifting}) is $O(h)$, ensuring the geometric consistency of the approximation process.

Let $X$, $Y$ be two surfaces verifying  Hypothesis \ref{cond_surf}, and  $g\in BV(Y)$. We denote by $\{\tri_h\}_h$, $\{\Yy_h\}_h$  two sequences of $h$-admissible triangulations of $X$  and $Y$, respectively, verifying Hypothesis \ref{hyp0l2}. We recall that the control on the angle between continuous and discrete corresponding normal vectors prevents pathological cases like the Schwartz lantern. We finally denote by $\{g_h\}_h$ the discretization of $g$ on the sequence of triangulations  $\{\Yy_h\}_h$ obtained by the discretization process described in Section \ref{part.disc}.

We point out that $h$ denotes the family parameter and the triangulation diameter (the maximum diameter of the triangles contained in $\tri_h$). Then, the $\Gamma$-convergence results hold for the typical case of increasingly fine triangulations. Finally, we note that the regularity condition \eqref{regular-tri} is fundamental in our proof to ensure global error estimate in the finite elements framework.

Then, for every $h$, the discrete energy is defined by
$$ E_h:\PP_{1}(\tri_h)\rightarrow \R\cup\{+\infty\}$$ 
\begin{equation}\label{eq.EBV}
	E_h(f_h)= \left(L_1^1[f_h,\tri_h] + V[f_h,\tri_h])\right) +  
	\Var[\mu_{(\tri_h,f_{h})} - \mu_{(\Yy_h,g_h)}]^2,
\end{equation}
where the  operators $L_1^1$, $V$, $\mbox{Var}$ are define in \eqref{eq.NC}, \eqref{discrete-var}, and \eqref{discr-var}, respectively. For every $h>0$, the optimal signal on $\tri_h$ is defined by the following problem 
\begin{equation}\label{discreteP}
\inf_{f_h\in \PP_{1}(\tri_h)} E_h(f_ h).
\end{equation}
The main goal of this section is to prove that the minimizers of $E_h$ converge to the minimizers of the continuous problem 
\begin{equation}\label{Fbv2}
\inf_{f\in BV(X)} E(f), \quad \quad E(f) = \|f\|_{BV(X)} +  \normvar{\var{X}{f}-\var{Y}{g}}^2\,.
\end{equation}
First of all, we prove the existence result for the discrete problem :
\begin{prop}\label{min-discret} For every $h>0$, there exists at least one solution  to problem \eqref{discreteP}.
\end{prop}

\begin{proof} For every $f\in\PP_{1}(\tri_h)$ we have  $\|\nabla_{\tri_h} f\|_{L^1(\tri_h)}=
  V[f,\tri_h]$ and $L_1^1[f,\tri_h] = \|f\|_{L^1(\tri_h)}$. Then, every minimizing sequence is bounded in $BV$ so that it weak-$*$ converges in $BV$ (up to a subsequence) to some function belonging to $\PP_{1}(\tri_h)$. The result follows from Lemma \ref{conv-var} and the lower semicontinuity of the total variation with respect to the weak-$*$ convergence.
\end{proof}
As the discrete and continuous energies are not defined in the same space, we introduce a suitable topology to compare the two spaces and generalize the classical definition of $\Gamma$-convergence (see \cite{Braides}):

\begin{defn}[{\bf $S$-topology and $\Gamma$-convergence}]\label{Gamma}
	Let $X$ be a surface satisfying Hypothesis \ref{cond_surf}, $f\in BV(X)$, and $\{\tri_h\}_h$  be a sequence  of admissible triangulations for $X$ verifying Hypothesis \ref{hyp0l2}. In this definition, $\{f_h\}_h$ denotes a sequence of functions such that $f_h\in \PP_{1}(\tri_h)$ for every $h>0$. 

	\begin{itemize}
		\item 	We say that $\{f_h\}_h$ converges to $f$ with respect to the  $S$-topology ($f_h \overset{S}{\rightharpoonup} f$) if and only if 
			\begin{equation}\label{S-conv-bv}
				\lim_{h\to 0} \|{f_h^\ell - f}\|_{L^1(X)}
			\end{equation}
			where for any $h>0$, $f_h^\ell$ is the projection of $f_h$ onto $X$ (see Definition \ref{lifting-def}). 
		\item 	We say that $(E_h)_h$  $\Gamma$-converges to $E$ if the following conditions hold: 
			\begin{enumerate}[label=$(\roman*)$]
				\item {\bf Lower bound:} for every $f\in BV(X)$ and for every sequence  $\{f_h\}_h$ 
					such that $f_h \overset{S}{\rightharpoonup} f$, we have 
					\[E(f)\leq \liminf_{h\rightarrow 0} E_h(f_h)\,; \]
				\item {\bf Upper bound:} for every $f\in BV(X)$ there exists  a sequence  $\{f_h\}_h$ 
					such that $f_h \overset{S}{\rightharpoonup} f$ and
			\[E(f)\geq \limsup_{h\rightarrow 0} E_h(f_h)\,.\]
			\end{enumerate}
	\end{itemize}
\end{defn}

\noindent We can now prove the main result of this work :

\begin{thm}\label{gamma-conv} The sequence $\{E_h\}_h$ \eqref{eq.EBV} $\Gamma$-converges to $E$ \eqref{Fbv2} with respect to the $S$-topology of Definition \ref{Gamma}.
\end{thm}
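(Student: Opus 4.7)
The plan is to mimic the structure of Theorem \ref{L2-gamma-conv}: establish the lower bound via $L^1$-lower semicontinuity of the $BV$-norm together with Lemma \ref{conv-var} for the varifold attachment, and construct a recovery sequence first for smooth functions, extending to general $f \in BV(X)$ by a diagonal argument based on the density of $C^1(X)$ in $BV(X)$ given in Theorem \ref{approx2} of the Appendix.

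For the lower bound I would take $(f_h)_h$ with $f_h \in \PP_{1}(\tri_h)$ and $f_h^\ell \to f$ in $L^1(X)$, and assume without loss of generality that $\liminf_h E_h(f_h) < \infty$. Extracting a subsequence realising the $\liminf$ with uniformly bounded energy, both $L_1^1[f_h,\tri_h]$ and $V[f_h,\tri_h] = \|\nabla_{\tri_h} f_h\|_{L^1(\tri_h)}$ stay bounded. Proposition \ref{L1norms} applied on $\tin_h$ gives $\|f_h^\ell\|_{BV(X)} \leq (1+O(h))(L_1^1[f_h,\tri_h] + V[f_h,\tri_h])$, so $L^1$-lower semicontinuity of the $BV$-norm yields $\|f\|_{BV(X)} \leq \liminf_h (L_1^1[f_h,\tri_h] + V[f_h,\tri_h])$. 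For the attachment, the $P_1$ estimate in Lemma \ref{approx-var} gives $\|\vardis{\tri_h}{f_h} - \var{X}{f_h^\ell}\|_{W'} = O(h)(1 + V[f_h,\tri_h]) = O(h)$; extracting a further subsequence with $f_h^\ell \to f$ almost everywhere and combining with Lemma \ref{conv-var} (which in the Hilbert space $W'$ promotes weak-$*$ plus norm convergence to strong convergence), one obtains $\vardis{\tri_h}{f_h} \to \var{X}{f}$ in $W'$ and similarly $\vardis{\Yy_h}{g_h} \to \var{Y}{g}$, hence $\|\vardis{\tri_h}{f_h} - \vardis{\Yy_h}{g_h}\|_{W'}^2 \to \|\var{X}{f} - \var{Y}{g}\|_{W'}^2$. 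Adding the two contributions gives the lower bound.

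For the upper bound I would first handle smooth $f \in C^1(X)$: define $f_h \in \PP_{1}(\tri_h)$ by the $P_1$ discretization of Section \ref{part.disc}. Because $\tri_h \in \Treg$ and Hypothesis \ref{hyp0l2} holds, standard $P_1$ interpolation estimates give $f_h^\ell \to f$ uniformly and $\nabla_{\tri_h} f_h \to \nabla^X f$ almost everywhere; dominated convergence together with Proposition \ref{L1norms} and Proposition \ref{conv-area} then produce $L_1^1[f_h,\tri_h] \to \|f\|_{L^1(X)}$ and $V[f_h,\tri_h] \to \|\nabla^X f\|_{L^1(X)}$, while Lemma \ref{approx-var} (with $V[f_h,\tri_h]$ staying bounded) handles the attachment, giving $E_h(f_h) \to E(f)$. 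For general $f \in BV(X)$ one appeals to Theorem \ref{approx2} to obtain $f^n \in C^1(X)$ with $f^n \to f$ in $L^1$ and $\|f^n\|_{BV(X)} \to \|f\|_{BV(X)}$, which implies $E(f^n) \to E(f)$; a diagonal extraction identical to the one at the end of the proof of Theorem \ref{L2-gamma-conv} then produces the recovery sequence for $f$.

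The main obstacle is the upper bound for non-smooth $f$: the diagonal argument hinges on strict $BV$-convergence of the smooth approximations (i.e.\ convergence of the $BV$-norms, not just of $L^1$), which is exactly the content of the appendix approximation theorem and is the technical heart of the argument; without strict convergence one only has $\|\nabla f\| \leq \liminf \|\nabla f^n\|$, which is insufficient to obtain $E(f^n) \to E(f)$. A secondary concern is that the $P_1$ estimate of Lemma \ref{approx-var} carries the factor $1 + \|\nabla f_h\|_{L^1(\tri_h)}$; this is harmless both along the recovery sequence for smooth $f$, where the discrete gradients are uniformly bounded, and along any energy-bounded sequence in the lower bound, where $V[f_h,\tri_h]$ is controlled by the energy.
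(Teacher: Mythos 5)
Your proposal is correct and follows essentially the same route as the paper: lower bound via the projection estimate of Proposition \ref{L1norms}, $L^1$-lower semicontinuity of the $BV$-norm, and Lemmas \ref{approx-var} and \ref{conv-var} for the attachment term; upper bound via the $P_1$ discretization for $f\in C^1(X)$ followed by a diagonal argument resting on the strict-topology density result of Theorem \ref{approx2}. The only cosmetic difference is that the paper makes the interpolation step quantitative through Ciarlet's estimate (giving $\|f-f_h^\ell\|_{BV(X)}=O(h)$) where you invoke qualitative $P_1$ interpolation convergence plus dominated convergence, and you correctly identify strict $BV$-convergence as the crux of the general case.
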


\begin{proof} {\bf Lower bound}.  Let $\{f_h\}_h$ be a sequence of functions such that $f_h\in \mathbb{P}_{1}(\tri_h)$ for every $h>0$ and  $f_h \overset{S}{\rightharpoonup} f \in BV(X)$.  Thus we have 
	\[ f_h^\ell  \overset{L^1(X)}{\longrightarrow} f\]
where $f_h^\ell$ is the projection of $f_h$ onto $X$ (see Definition \ref{lifting-def}). 
Without loss of generality, we can suppose that 
\begin{equation}\label{bound-energy}
	\sup_{h>0} E_h(f_h)<\infty\,.
\end{equation}
By  Proposition \ref{L1norms} and Hypothesis \ref{hyp0l2}
we get  
\begin{equation}\label{bv-discrete-cont}
\|f^\ell_h\|_{BV(X)}\leq \|f_h\|_{BV(\tri_h)} + O(h) = L_1^1[f_h,\tri_h] + V[f_h,\tri_h] + O(h).
\end{equation}

\noindent Moreover, by Lemma \ref{approx-var}, we have
$$  \normvar{\vardis{\tri_h}{f_h}-\var{X}{f_h^\ell}}^2\overset{h\rightarrow 0}{\longrightarrow} 0\,$$
so that, as $f^\ell_h \overset{L^1(X)}{\rightarrow} f$, Lemma \ref{conv-var} implies 
$$\normvar{\var{X}{f_h^l}-\var{X}{f}}^2\overset{h\rightarrow 0}{\longrightarrow} 0$$
then
\begin{equation}\label{conv-var-bv-3}
	\Var[\var{\tri_h}{f_h}-\var{\Yy_h}{ g_h}]^2 =\normvar{\vardis{\tri_h}{f_h}-\vardis{\Yy_h}{ g_h}}^2\overset{h\rightarrow 0}{\longrightarrow} \normvar{\var{X}{f}-\var{Y}{ g}}^2\,.
\end{equation}

\noindent Now, as the $BV$ norm is lower semicontinuous with respect to the $L^1$ convergence, we get
\begin{equation}\label{final-l2}
	E(f)\leq \liminf_{h\rightarrow 0} \left(\|f_h^\ell\|_{BV(X)} +  \Var[\var{\tri_h}{f_h}-\var{\Yy_h}{ g_h}]^2\right)\leq \liminf_{h\rightarrow 0} E_h(f_h). 
\end{equation}

{\bf Upper bound}. Because of Theorem \ref{approx2}, we can assume that $f\in C^1(X)$ and get the general result by a diagonal argument.

The discretization process defines,  for every $h>0$, an extension $\xeta$ of $X$ to extend $f$ to  $\tilde{f}\in W^{1,\infty}(\xeta)$ (Theorem \ref{approx2}) and define the discrete function $f_h$ (see Section \ref{part.disc}). Of course, the extension $\xeta$ can depend on $h$, but in the following, we prove some estimates for a fixed $h>0$ and write $\xeta$. In particular, as $\tilde{f}$ is defined via a reflection symmetry with respect to the boundary of $X$ (see Theorem \ref{approx2}),  we have
\begin{equation}\label{bound-ext}
	\|\tilde{f}\|_{W^{1,\infty}(\xeta)} \leq \|f\|_{W^{1,\infty}(X)}
\end{equation}
which implies that
$f_h \in W^{1,\infty}(\tri_h)$ for every $h$, and $\sup_h \|f_h\|_{W^{1,\infty}(\tri_h)} \leq \|f\|_{W^{1,\infty}(X)}$.

The usual estimates for interpolation error, as $h \rightarrow 0$, give  
\begin{equation}\label{ciarlet-est}
\|\tilde{f}\circ\pi_{\xeta}-f_h\|_{W^{1,1}(\tri_h)} \leq O(h) \|\tilde{f}\circ\pi_{\xeta}\|_{W^{1,\infty}(\tri_h)}\,.
\end{equation}
This can be deduced from  Theorem 3.1.6  in \cite{Ciarlet} (applied with $k=s=0$, $q=m=1$, $p=\infty$) by considering the sum on all the triangles and by using 
the fact that the area of every triangle is bounded by $\pi(h/2)^2$. We note, in particular, that these estimates hold for regular triangulations verifying \eqref{regular-tri}.

Now, according to Theorem \ref{approx2}, $\tilde{f}$ coincides with $f$ on $X^{-\eta}$ and its $BV$ norm can be arbitrarily small on $X \setminus X^{-\eta}$.
Then, we get 
$$
\|f-f_h^\ell\|_{W^{1,1}(X)}=\|\tilde{f}-f_h^\ell\|_{W^{1,1}(X^{-\eta})} + O(h)\,.
$$
Moreover, as $\tri_h$ virifies Definition \ref{admis-triang}\ref{hyp.bij} , $X^{-\eta}$ can be projected on a subset of $\tri_h$, and by using \eqref{bound-ext} and \eqref{ciarlet-est}, we get
$$
\|f-f_h^\ell\|_{W^{1,1}(X)}=O(h)\,.
$$
This proves in particular that  $f_h^\ell\rightarrow f$ strongly in $L^1(X)$, which means that $f_h \overset{S}{\rightharpoonup} f$.
Now, by Proposition \ref{L1norms} and Hypothesis \ref{hyp0l2}, we have
\[
	\|f^\ell_h\|_{L^1(X)} = \|f_h\|_{L^1(\tin_h)}  + O(h)= L_1^1[f_h,\tri_h]  + O(h),
\]
so that
\[
	L_1^1[f_h,\tri_h] \rightarrow \|f\|_{L^1(X)}^1\quad \mbox{as} \quad h\rightarrow 0.
\] 
\noindent Similarly, the convergence of the total variation term follows from
$$  \|\nabla_{X} f_h^\ell\|_{L^1(X)}=\|\nabla_{\tri_h} f_h\|_{L^1(\tin_h)} + O(h) = V[f_h,\tri_h]+O(h)\,.$$
As in the case of the lower bound, the convergence of the fvarifold term follows by Lemmas \ref{approx-var} and \ref{conv-var}, which finally proves
\[E(f)=\lim_{h\rightarrow 0} E_h(f_h). \]
\end{proof}
The $\Gamma$-convergence result implies, in particular, the convergence of the minimizers :

\begin{thm}[{\bf Convergence of minimizers}]\label{minima}
Let $\{\tri_h\}_h$ be a sequence of admissible triangulations for $X$ verifying  Hypothesis \ref{hyp0l2}. Let $\{f_h\}_h$ be a sequence of minimizers of $E_h$ (i.e., $E_h(f_h)=\min_{f\in \PP_{1}(\tri_h)} E_h(f) $). Then, $\{f^\ell_h\}_h$ weakly-$*$ converges in $BV(X)$ (up to a subsequence) to a minimizer of $E$ and
\[
\lim_{h\rightarrow 0}  \min_{f\in \PP_{1}(\tri_h)} E_h(f) =  \min_{f\in BV(X)} E(f). \]
\end{thm}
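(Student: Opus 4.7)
The plan is to follow the standard convergence-of-minima scheme from $\Gamma$-convergence, now that Theorem~\ref{gamma-conv} is available: establish a uniform bound on the minimizers, extract a subsequence convergent in the $S$-topology via BV-compactness, and combine the $\liminf$ and $\limsup$ inequalities.

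First I would obtain the $\limsup$ half. Let $f^*\in BV(X)$ be a minimizer of $E$ (which exists by Theorem~\ref{existence}). The upper bound in Theorem~\ref{gamma-conv} provides a recovery sequence $(\bar f_h)_h$ with $\bar f_h\in\PP_1(\tri_h)$, $\bar f_h\overset{S}{\rightharpoonup}f^*$ and $\limsup_{h\to 0} E_h(\bar f_h)\le E(f^*)=\min_{BV(X)}E$. Since $f_h$ minimizes $E_h$, we have $E_h(f_h)\le E_h(\bar f_h)$, hence
\[
\limsup_{h\to 0}\,\min_{f\in\PP_1(\tri_h)}E_h(f)\le \min_{f\in BV(X)}E(f).
\]

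Second, I would prove compactness of the minimizers. The inequality above already gives a uniform bound $\sup_h E_h(f_h)<+\infty$, which in particular bounds $L_1^1[f_h,\tri_h]+V[f_h,\tri_h]=\|f_h\|_{BV(\tri_h)}$ uniformly in $h$ (using the identifications recalled in the proof of Proposition~\ref{min-discret}). Applying \eqref{bv-discrete-cont} one obtains a uniform bound on $\|f_h^\ell\|_{BV(X)}$. By the standard compactness theorem for $BV$ on a compact $2$-surface with boundary (recalled in the Appendix), a subsequence (still denoted $f_h^\ell$) converges weakly-$*$ in $BV(X)$, hence strongly in $L^1(X)$, to some $f_*\in BV(X)$. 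The $L^1$-convergence is exactly the $S$-convergence from Definition~\ref{Gamma}, so $f_h\overset{S}{\rightharpoonup}f_*$.

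Finally I would conclude using the $\liminf$ half of Theorem~\ref{gamma-conv}:
\[
E(f_*)\le \liminf_{h\to 0} E_h(f_h)=\liminf_{h\to 0}\,\min_{f\in\PP_1(\tri_h)}E_h(f).
\]
Combining with the $\limsup$ inequality above gives
\[
\min_{BV(X)}E\le E(f_*)\le \liminf_{h\to 0}\min E_h\le \limsup_{h\to 0}\min E_h\le \min_{BV(X)}E,
\]
so all quantities coincide, $f_*$ is a minimizer of $E$, and $\lim_{h\to 0}\min_{\PP_1(\tri_h)} E_h(f) = \min_{BV(X)} E(f)$. The main technical point is the compactness step: one must be careful that the discrete $BV$-energy on $\tri_h$ really controls $\|f_h^\ell\|_{BV(X)}$ up to a $(1+O(h))$ factor, which is precisely what \eqref{bv-discrete-cont} together with Proposition~\ref{L1norms} and Hypothesis~\ref{hyp0l2} provide; the rest is a standard diagonal/extraction argument.
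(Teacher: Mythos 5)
Your proposal is correct and follows essentially the same route as the paper: a uniform bound on the minimizers via \eqref{bv-discrete-cont}, $BV$-compactness to extract an $S$-convergent subsequence, the lower-semicontinuity/$\liminf$ inequality for one direction, and the recovery-sequence/upper-bound inequality for the other. The only (cosmetic) difference is that you derive the uniform energy bound from the $\limsup$ inequality first, whereas the paper simply assumes $\sup_h E_h(f_h)<\infty$ without loss of generality and invokes the lower-semicontinuity argument directly rather than citing the $\liminf$ half of Theorem~\ref{gamma-conv}.
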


\begin{proof}

We consider the sequence  $\{f_h \in \PP_{1}(\tri_h)\}_h$  of minimizers of $E_h$ and, without loss of generality, we can also  suppose  that
\[\sup_{h>0} E_h(f_h) < \infty\,.\]
Similarly to \eqref{bv-discrete-cont}, we have 
\[
	\|f^\ell_h\|_{BV(X)}\leq L_1^1[f_h,\tri_h]+ V[f_h,\tri_h] + O(h),
\]
so that  $\{f_h^\ell\}_h$ is uniformly bounded in $BV(X)$. Then, there exists a subsequence (not relabeled) such that  $\{f_h^\ell\}_h$ weak-$*$ converges to $f^{\infty}\in BV(X)$ and, by Lemmas \ref{approx-var} and \ref{conv-var}, we get 
\[
	\Var_h[\var{\tri_h}{f_h}-\var{\Yy_h}{ g_h}]^2 \rightarrow \normvar{\var{X}{f^\infty}-\var{Y}{ g}}^2.
\]
Then, by lower  semicontinuity the $BV$ norm with respect to the $L^1$ topology, we get
\[
	\min_{f\in BV(X)} E(f)  \leq E(f^{\infty})\leq  \liminf_{h\rightarrow 0} \min_{f\in \PP_{1}(\tri_h)}E_h(f).
\]
The other inequality follows from the upper bound condition of $\Gamma$-convergence applied to a minimizer of $E$. 
This proves, in particular, that $f^\infty$ minimizes $E$.
\end{proof}

\begin{rem}[{\bf $H^1$ model}]\label{H1-gamma-conv} The discrete problem for $H^1$ signals is defined  by  the discrete energy
\[
	E_h(f)=L_1^2[f,\tri_h] + H_h[f,\tri_h] + \Var_h[\var{\tri_h}{f}-\var{\Yy_h}{g_h}]^2.
\]
Then, Theorems \ref{gamma-conv} and \ref{minima}  still hold in this case, and their proofs can be adapted by considering as $S$-topology the $L^2$ convergence of the projection (i.e., $f_h\overset{S}{\rightharpoonup} f$ if $f_h^\ell \rightarrow f$ strongly in $L^2(X)$) and using the compact embedding in $L^2(X)$ and the compactness of the unit ball with respect to the weak topology of $H^1(X)$. 

\end{rem}

\begin{rem}[{\bf $L^2$ model}]\label{L2-gamma-conv} 
In this case, the discrete energy is defined on the set of $P_0$ finite elements by the following function to minimize on $\PP_{0}(\tri_h)$:
$$E_h:\PP_{0}(\tri_h)\rightarrow \R\cup\{+\infty\}\;,\quad 
E_h(f_h)= \frac{\gamma_f}{2}L_{0}^2[f_h,\tri_h]  + \frac{\gamma_W}{2}\Var[\mu_{(\tri_h,f_{h})} - \mu_{(\Yy_h,g_h)}]^2\,.$$
We must suppose that $\gamma_f/\gamma_W$ is large enough to apply Theorem \ref{l2-ex}. We note that in the case of  $W'$-fvarifold norm (see Remarks \ref{choiceW} and \ref{l2-model-exist}), Lemma \ref{approx-var}  still holds and Lemma \ref{conv-var} can be proved under the hypothesis of $a.e$ convergence of signals.

For  $L^2$ signals, it is sufficient to define the $S$-topology, $f_h\overset{S}{\rightharpoonup} f$, via the following fvarifold convergence
\begin{equation}
\var{X}{f_h^\ell} \overset{*}{\rightharpoonup} \var{X}{f} \; \mbox{in}\; \mathcal{M}^X \,,
\end{equation}
where $\mathcal{M}^X$ is defined in Definition \ref{MX}. Because of \eqref{en-en}, we get the lower semicontinuity for the lower bound of $\Gamma$-convergence. The upper bound follows similarly to the $BV$ case, knowing that the discretization process guarantees the strong convergence in $L^2$. 

Concerning the convergence of minimizers, Theorem \ref{l2-ex} (in particular \eqref{bound-min}) implies that the sequence of discrete minimizers $\{f_h\}_h$ is uniformly bounded in $L^\infty$. Then, the sequence of measures $\{\var{X}{f_h^\ell}\}_h \subset \mathcal{M}^X$ is tight and, because of the Prokhorov's theorem,  it  weak-$*$ converges (up to a subsequence) to some $\mu_\infty\in \mathcal{M}^X$. 
Now, the lower semicontinuity of $\tilde{E}$ with respect to the weak-$*$ convergence of measures and the fact that $\tilde{E}$ is minimized by a fshape associated to a $L^2$ function (see Remark \ref{l2-model-exist}), we get
\[\min_{f\in L^2(X)}E(f)\leq \tilde{E}(\mu_{\infty})\leq \liminf_{h\rightarrow 0} \tilde{E}(\var{X}{f_h^\ell})\leq \liminf_{h\rightarrow 0} \min_{f\in \PP_{0}(\tri_h)} E_h(f)\,,\]
that gives the needed inequality. The other one follows from the upper bound condition of $\Gamma$-convergence.
In particular, we get 
$$\min_{f\in L^2(X)}\; E(f) =\tilde{E}(\mu_\infty)$$ 
and, from Theorem \ref{l2-ex}, there exists $f_*\in L^2(X)$ such that $\mu_\infty=\mu_{(X,f_*)}$. Thus, $f_h  \overset{S}{\rightharpoonup} f_*$.
\end{rem}

\section*{Appendix: Approximation theorem for $BV$ functions on manifolds}\label{BV_manifold}

\begin{thm-non}
  Let $X$ be an orientable Riemannian compact manifold with boundary $\partial X$ and let $X_0=X\setminus \partial X$. For any $f\in BV(X)$ there exists a sequence $\{f_h\}_h\subset C^1(X_0)$ such that 
$$f_h\to \rst{f}{X_0} \text{ in }L^1(X_0,\R)\quad
\text{and}\quad
|D_Xf|(X)=\displaystyle{\lim_{h\to\infty}\int_{X_0}|\nabla f_h| \oX}\,.$$
\end{thm-non}

\begin{proof}
  Let $\{U_i\}_{i\in\llbracket 1,n \rrbracket}$ be a finite atlas on $X_0$ and for any $i\in \llbracket 1,n \rrbracket$ let $\varphi_i:U_i\to V_i$ be a local chart such
  that $\overline{U_i}$ is compact and $\varphi_i$ is the restriction to $U_i$
  of a $C^1$ diffeomorphisms from $\overline{U_i}\to\overline{V_i}$
  (such an atlas exists since $X$ is compact). Let $\{\eta_{j}\}_{j\geq 0}$
  be a partition of unity such that $\text{supp}(\eta_j)$ is compact  for any $j\geq 0$ and there exists a partition
  $\{J_i\}_{i \in\llbracket 1,n \rrbracket}$ of $\mathbb{N}$ for which
  $\text{supp}(\eta_{j})\subset U_i$ for any $j\in J_i$ which is
  locally finite on any $U_i$ (i.e., for any $x\in U_i$, there exists an
  open set $U_i(x)\subset U_i$ such that $\text{supp}(\eta_j)\cap
  U_i(x)=\emptyset$ for any $j\in J_i$ except on a finite number of
  $j$'s).

For any $j\in J_i$ we consider $\epsilon_j$ such that
  $d(\varphi_i(\text{supp}(\eta_j)),V_i^c)>\epsilon_j$ and for
  $\beps=\{\epsilon_j\}_{j\geq 0}$ we consider the linear operator
  $L_\beps:BV(X)\to C^1(X_0)$ defined by
  \[L_\beps f =  \sum_{i=1}^n\varphi_i^*\Big(\sum_{j\in J_i}\psi_i^*(f\eta_j)* \rho_{\epsilon_j}\Big)\]
where $\psi_i:V_i\to U_i$ is the inverse mapping of $\varphi_i$. We recall the classical notation of differential geometry for pullbacks where for any function $\ell\in C_c(U_i)$, $\psi^*_i\ell = \ell\circ\psi_i$ and for any $v\in \chi_c^1(U_i)$, $\psi_i^*v=(d\psi_i)^{-1}v\circ\psi_i$. We recall that $\chi^1_c(X_0)$ denotes the set of $C^1$ vector fields $u:X \to TX$ on $X$ compactly supported in $X_0$.
Eventually, on every $V_i$, we introduce $\alpha_idx$ the pullback of $\rst{\oX}{U_i}$ by $\psi_i$ on $V_i$ such that for any $\ell\in C_c(U_i)$, we have $\int_{U_i}\ell\oX=\int_{V_i}(\psi_i^*\ell) \alpha_i dx$.

Let $\delta>0$. We can assume that for any $1\leq i\leq n$ and any $j\in J_i$, we have $\epsilon_j$ small enough so that
\[\int_{V_i}|\psi_i^*(f\eta_j)*\rho_{\epsilon_j}-\psi_i^*(f\eta_j)|\alpha_idx\leq \delta 2^{-j}\,.\]
Since $\{\eta_j\}_{j\geq 0}$ is a partition of unity, we have $f=\sum_{i=1}^n\limits\sum_{j\in J_i}\limits f\eta_j$ and
\begin{equation}
  \label{eq:14}
  \int_X|L_\beps f -f |\oX\leq \sum_{i=1}^n\int_{V_i}\sum_{j\in V_j}|\psi_i^*(f\eta_j)*\rho_{\epsilon_j}-\psi_i^*(f\eta_j)|\alpha_idx\leq 2\delta\,.
\end{equation}
This first inequality is enough to prove an approximation result in a $L^1$ sense. We turn now to the control of the total variation part.

Let $u \in \chi^1_c(X_0)$. We have the following decomposition using the integration by part formula (\ref{eq:13}) for equality $(a)$ and the classical integration by part on $\mathbb{R}^d$ for equality $(b)$
  \begin{align*}
    \int_XL_\beps f\divx(u)\oX &= \sum_{i=1}^n\int_{X}\varphi_i^*\Big(\sum_{j\in J_i}\psi_i^*(f\eta_j)* \rho_{\epsilon_j}\Big)\divx(u)\oX\\
&\stackrel{(a)}{=}-\sum_{i=1}^n\int_{X}u\bigg(\varphi_i^*\Big(\sum_{j\in J_i}\psi_i^*(f\eta_j)* \rho_{\epsilon_j}\Big)\bigg)\oX\\
&=-\sum_{i=1}^n\int_{V_i}\sum_{j\in J_i}(\psi^*_iu)\Big(\sum_{j\in J_i}\psi_i^*(f\eta_j)* \rho_{\epsilon_j}\Big)\alpha_idx\\
    &\stackrel{(b)}{=}\sum_{i=1}^n\int_{V_i}\sum_{j\in J_i}[\psi_i^*(f\eta_j)]*
    \rho_{\epsilon_j}\mdiv(\alpha_i\psi_i^*u)dx\\
    &=\sum_{i=1}^n\int_{V_i}\sum_{j\in
      J_i}\psi_i^*(f\eta_j)\mdiv([\alpha_i\psi_i^*u]*
    \rho_{\epsilon_j})dx\\
    &=\sum_{i=1}^n\underbrace{\int_{V_i}\sum_{j\in
        J_i}\psi_i^*(f)\mdiv(\psi_i^*\eta_j([\alpha_i\psi_i^*u]*
      \rho_{\epsilon_j}))dx}_{A_{ij}} \\
    & \qquad\qquad-\sum_{i=1}^n\underbrace{\int_{V_i}\sum_{j\in
        J_i}\psi_i^*(f)([\alpha_i\psi_i^*u]*
      \rho_{\epsilon_j})(\psi_i^*\eta_j)dx}_{B_{ij}}
  \end{align*}
with
\begin{equation*}
  \label{eq:2}  
  A_{ij}
  =\int_{V_i}\sum_{j\in
    J_i}\psi_i^*(f)\left[\mdiv\Big(\alpha_i\psi_i^*\eta_j((\psi_i^*u)*
  \rho_{\epsilon_j})\Big)+\mdiv\Big(\psi_i^*\eta_j\big([\alpha_i\psi_i^*u]*
  \rho_{\epsilon_j}-\alpha_i[(\psi_i^*u)*
  \rho_{\epsilon_j}]\big)\Big)\right]dx\,.
\end{equation*}
However, for any $\delta>0$, denoting $| \cdot |_x$ the norm at $x\in X$ induced by the metric, for $x\in\text{supp}(\eta_j)$ and $\epsilon_j$ small enough  we have

\begin{align*}
    |\varphi_i^*((\psi_i^*u)\ast\rho_{\epsilon_j})|_x(x) & =\bigg|(d_x\varphi_i)^{-1}\Big(\int_{V_i}d_{\psi_i(\varphi_i(x)-y)}\varphi_iu(\psi_i(\varphi_i(x)-y))\rho_{\epsilon_j}(y)dy\Big)\bigg|_x\\
    &\leq
    1+\bigg|(d_x\varphi_i)^{-1}\Big(\int_{V_i}(d_{\psi_i(\varphi_i(x)-y)}\varphi_i-d_x\varphi_i)
    u(\psi_i(\varphi_i(x)-y))\rho_{\epsilon_j}(y)dy\Big)\bigg|_x\\
    &\leq 1+\delta
\end{align*}
uniformly in $u$ such that
$\|u\|_\infty\leq 1$ and $j\in J_i$. Hence
\begin{equation}
  \label{eq:4}
  \begin{split}
    \sum_{i=1}^n\bigg| \int_{V_i}\sum_{j\in
      J_i}\psi_i^*(f)\mdiv(\alpha_i\psi_i^*\eta_j((\psi_i^*u)*
    \rho_{\epsilon_j}))dx\bigg|&\leq \sum_{i}\int_X\sum_{j\in J_i}\eta_j(1+\delta)d|D_Xf|\\
    &\leq (1+\delta)|D_Xf|(X)\,.
  \end{split}
\end{equation}
Moreover, for $\epsilon_j$ small enough, for  $x\in \text{supp}(\eta_j)$ we can assume 
\begin{align*}
  \bigg|\varphi_i^*&\frac{[\alpha_i\psi_i^*u]*
    \rho_{\epsilon_j}-\alpha_i[(\psi_i^*u)*
    \rho_{\epsilon_j}]}{\alpha_i}\bigg|_x  =\\
&=\bigg|d_x\varphi_i^{-1}\int_{V_i}\frac{\alpha_i(\varphi_i(x)-y)-\alpha_i(\varphi_i(x))}{\alpha_i(\varphi_i(x))}d_{\psi(\varphi_i(x)-y)}\varphi_iu(\psi_i(\varphi_i(x)-y))\rho_{\epsilon_j}(y)dy\bigg|_x\leq \delta
  \end{align*}
 so that
\begin{equation}
  \label{eq:5}
  \begin{split}
    \sum_{i=1}^n&\bigg|\int_{V_i}\sum_{j\in
      J_i}\psi_i^*f\mdiv\bigg(\alpha_i\psi_i^*\eta_j\frac{[\alpha_i\psi_i^*u]*
  \rho_{\epsilon_j}-\alpha_i[(\psi_i^*u)*
  \rho_{\epsilon_j}]}{\alpha_i}\bigg)dx\bigg|\\
    &\leq \sum_{i}\int_X \sum_{j\in
      J_i}\eta_j\delta |D_Xf|\leq \delta|D_Xf|(X)
  \end{split}
\end{equation}
Thus we have
\begin{equation}
  \label{eq:6}
  \sum_{i=1}^nA_{ij}\leq|D_Xf|(X)(1+2\delta)\,.
\end{equation}
Let us consider now the $B_{ij}$'s. We have
\begin{equation}
  \label{eq:7}
  \begin{split}
    B_{ij}&=\int_{V_i}\bigg\langle
    \int_{V_i}(\alpha_i\psi_i^*u)(x-y)\rho_{\epsilon_j}(y)dy,\Big(\psi_i^*f\nabla(\psi_i^*\eta_j)\Big)(x)\bigg\rangle
    dx\\
    &=\int_{V_i}\bigg\langle
    (\alpha_i\psi_i^*u)(x),\int_{V_j}\Big(\psi_i^*f\nabla(\psi_i^*\eta_j)\Big)(x-y)\rho_{\epsilon_j}(y)dy\bigg\rangle
    dx\\
    &=\underbrace{\int_{V_i}\alpha_i\psi_i^*f\psi_i^*u(\psi_i^*\eta_j)dx}_{B^1_{ij}}
    + \underbrace{\int_{V_i}\Big\langle
      (\alpha_i\psi_i^*u)(x),\big(\psi_i^*f\nabla(\psi_i^*\eta_j)\big)*\rho_{\epsilon_j}-\psi_i^*f\nabla(\psi_i^*\eta_j)\Big\rangle
      dx}_{B^2_{ij}}\,.
  \end{split}
\end{equation}
Concerning the $B^1_{ij}$ terms, we have
\begin{equation}
  \label{eq:9}
  \sum_{i=1}^n\sum_{j\in J_j}B^1_{ij}=\sum_{i=1}^n\int_X\sum_{j\in J_j}fu(\eta_j)\oX=\int_Xfu(1)\oX=0.
\end{equation}

For the $B^2_{ij}$ terms, let us notice that
$\sup_{V_i}|\alpha_i\psi_i^*u|<\infty$ uniformly in $u$ (since
$\|u\|_\infty\leq 1$) and
$(\psi_i^*f\nabla(\psi_i^*\eta_j))*\rho_{\epsilon_j}\to
(\psi_i^*f\nabla(\psi_i^*\eta_j))$ in $L^{1}(\mathbb{R}^d,dx)$ so that
for $\epsilon_j$ sufficiently small, we can assume that
$|B^2_{ij}|\leq \delta 2^{-(j+1)}$. Summing along the indices, we get
\begin{equation}
  \bigg|\sum_{i=1}\sum_{j\in J_i}B^2_{ij}\bigg|\leq \delta\label{eq:10}
\end{equation}
and, with (\ref{eq:6}), (\ref{eq:9}) and (\ref{eq:10}), we get
eventually that, for sufficiently small values of the $\epsilon_j$'s, we
have 
\begin{equation}
  \label{eq:8}
  \bigg|\int_XL_\beps f\divx(u)\oX\bigg|\leq |D_Xf|(X)(1+2\delta+\delta^2)+\delta\,,
\end{equation}
uniformly in $u\in \chi^2_c(X_0)$ satisfying $\|u\|_\infty\leq 1$.
Taking the supremum over such $u$, we get
\begin{equation}
  \label{eq:11}
  |D_XL_{\beps}f|(X)\leq |D_Xf|(X)(1+2\delta+\delta^2)+\delta.
\end{equation}
Since $\delta$ is arbitrary, we have shown that there exists a sequence
$(\beps_k)_{k\geq 0}$ such that $L_{\beps_k}f\in C^\infty(X_0)$ and 
\begin{equation}
  \label{eq:12}
  \limsup_{k}|D_XL_{\beps_k}f|(X)\leq |D_Xf|(X)\,.
\end{equation}
\end{proof}

\newpage 

\bibliographystyle{siam}  
\bibliography{bibliography}	

\end{document}